\documentclass[12pt]{article}

\usepackage{amsmath, amssymb, amsfonts, amsthm}
\usepackage{hyperref}

\newcommand{\R}{\mathbb{R}}
\newcommand{\Z}{\mathbb{Z}}

\newcommand{\Q}{\mathbb{Q}}
\newcommand{\Zp}{\mathbb{Z}_p}

\usepackage{amsmath}

\usepackage{tikz}
\usetikzlibrary{calc}
\usepackage{xcolor}

\numberwithin{equation}{section}
\usepackage{amsfonts}
\usepackage{amsthm}
\usepackage{amssymb}
\usepackage{bbm}
\usepackage{centernot}
\usepackage[x11names]{xcolor}
\usepackage{chemfig}
\usepackage{amsmath}
\usepackage{cite}
\usepackage[nodayofweek]{datetime} 
\usepackage{dsfont}
\usepackage{enumitem}
\usepackage{euscript}
\usepackage{faktor}
\usepackage{hyperref}
\usepackage{float}
\usepackage{geometry}
\usepackage{graphicx}
\usepackage{mathrsfs}
\usepackage{mathtools}
\usepackage{polynom}
\usepackage{stmaryrd}
\usepackage{tikz}
\usepackage{tikz-cd}
\usepackage{wasysym}
\usepackage{xfrac}
\usepackage[x11names]{xcolor}
\usepackage{mathtools}
\usepackage{setspace}
\usepackage{verbatim}

\newif\ifproofread

\DeclarePairedDelimiter\abs{\lvert}{\rvert}

\makeatletter
\newcommand*\bigcdot{\mathpalette\bigcdot@{.5}}
\newcommand*\bigcdot@[2]{\mathbin{\vcenter{\hbox{\scalebox{#2}{$\m@th#1\bullet$}}}}}
\makeatother

\usepackage{mathtools}

\theoremstyle{plain}

\newtheorem*{Lemma*}{Lemma}
\newtheorem*{Corollary*}{Corollary}
\newtheorem*{theorem*}{Theorem}

\newtheorem{claim}{Claim}[section]
\newtheorem{theorem}{Theorem}[section]
\newtheorem{lemma}[theorem]{Lemma}
\newtheorem{Lemma}[theorem]{Lemma}
\newtheorem{proposition}[theorem]{Proposition}
\newtheorem{Corollary}[theorem]{Corollary}

\theoremstyle{definition}

\newtheorem*{definition*}{Definition}

\newtheorem{remark}[theorem]{Remark}

\theoremstyle{plain}
\newcommand{\thistheoremname}{}
\newtheorem*{genericthm}{\thistheoremname}

\newcommand{\NN}{\mathbb{N}}

\newcommand{\QQ}{\mathbb{Q}}

\newcommand{\RR}{\mathbb{R}}

\newcommand{\ZZ}{\mathbb{Z}}

\newcommand{\Dd}{\mathcal{D}}

\newcommand{\fF}{\mathcal{F}}

\newcommand{\gG}{\mathfrak{g}}

\newcommand{\Oo}{\mathcal{O}}

\newcommand{\pP}{\mathfrak{p}}

\newcommand{\Tt}{\mathcal{T}}

\newcommand{\uU}{\mathfrak{u}}

\newcommand{\SL}{\operatorname{SL}}

\newcommand{\diag}{\on{diag}}

\newcommand{\on}[1]{\operatorname{#1}}

\newcommand{\FldGen}{a_K}
\newcommand{\SemiGroup}{\theta^{\ZZ}\zeta^{\NN}}
\newcommand{\zloverp}{\ZZ[\tfrac{1}{p}]}

\newcommand{\pdiag}{A}
\newcommand{\TheLattice}{x_K}

\newcommand{\OurFunDo}{\Dd_0}
\newcommand{\barOurFunDo}{\overline{\Dd_0}}
\newcommand{\Tpbar}{\overline{\Tt_p}}
\newcommand{\pibar}{\overline{\pi}}
\newcommand{\piti}{\tilde{\pi}}
\newcommand{\inner}[2]{\left\langle#1,#2\right\rangle}

\newcommand{\inn}[1]{\left<#1\right>}

\newcommand{\restr}[2]{\ensuremath{\left.#1\right|_{#2}}}

\newcommand{\set}[1]{\left\{#1\right\}}

\newcommand{\pid}{\mathrel{\ooalign{$\lneq$\cr\raise.22ex\hbox{$\lhd$}\cr}}}
\newcommand{\colf}[2]{\begin{pmatrix}
           #1 \\
           #2 
         \end{pmatrix}}

\AtBeginDocument{}
\newcommand{\ve}{\varepsilon}

\DeclareMathOperator{\GL}{GL}

\DeclareMathOperator{\Stab}{Stab}

\newcommand{\manote}[1]{\marginpar{\color{blue}\tiny [MHAG] #1}}

\newcommand{\pvnote}[1]{\marginpar{\color{magenta}\tiny [PV] #1}}

\newcommand{\cmatr}[2]{\left( \begin{matrix} #1 \\ #2 \end{matrix} \right) }

\newcommand{\matr}[4]{\left( \begin{matrix} #1 & #2 \\ #3 & #4 \end{matrix}
\right) }

\newcommand{\cvmatr}[4]{\left(\cmatr{#1}{#2},\cmatr{#3}{#4}\right)}

\newcommand{\av}[1]{\left|#1\right|}
\newcommand{\pa}[1]{\left(#1\right)}

\newcommand{\SubSemigroup}{\theta^{\ell\ZZ} \zeta^{\ell\NN}}
\newcommand{\Closure}{\overline{A(x;v)}}
\newcommand{\ClassNumberNorm}{n_0}
\newcommand{\e}{\underline{e}}

\newcommand{\GLT}{G}
\newcommand{\GLZP}{\Gamma}
\newcommand{\GLZZP }{\Gamma}

\usepackage{enumitem}

\newlist{legal}{enumerate}{10}
\setlist[legal]{label*=\arabic*.}

\title{The Fully Inhomogeneous $p$-Adic Littlewood Conjecture}
\author{
    Menny Aka, Alexander Gorodnik, Pankaj Vishe and Yuval Yifrach
}
\begin{document}
\maketitle
\begin{abstract}We prove that for almost every $\alpha\in\RR$, the \textit{fully inhomogeneous $p$-adic Littlewood Conjecture} holds; namely,
\begin{equation*}
		\forall \delta\in\RR,\forall \kappa\in\ZZ_p, \;\;\liminf_{\av{q}\rightarrow+\infty}q\inn{q\alpha+\delta}\abs{q+\kappa}_p=0.
\end{equation*}
Here, $\inn{\cdot}$ denotes the distance to the nearest integer and $|\cdot|_p$ denotes the $p$-adic norm.
Moreover, we prove this conjecture for every quadratic irrational $\alpha$.
This gives an affirmative answer to the $p$-adic version of a question posed by Cassels in \cite{Cas59}.
\end{abstract}

\section{Introduction}

The interplay between multiplicative Diophantine approximation and homogeneous dynamics has yielded profound insights in both fields. At the core of this interaction lies the classical Littlewood Conjecture, which asserts that for any real numbers $\alpha, \beta \in \R$, 
\begin{equation}\label{eq:littlewood}
\liminf_{q \to +\infty} q \cdot \left<q\alpha\right> \cdot \left<q\beta\right> = 0,
\end{equation}
where $\left<\,\cdot\,\right>$ denotes the distance to the nearest integer. 
While \eqref{eq:littlewood} obviously holds
when at least one of $\alpha,\beta$ is not badly approximable, very little was known otherwise
until the work of Cassels and Swinnerton-Dyer
\cite{Cassels1955},
who established the conjecture 
when $1,\alpha,\beta$ form a $\QQ$-basis for a real cubic field. 
Next important breakthrough was due to Pollington and Velani
\cite{PollingtonVelani2000}
who developed a method for studying 
Diophantine properties  
with respect to
a measure supported on the set of 
badly approximable numbers
and established that \eqref{eq:littlewood}
is also generic in a {strong} sense for
the badly approximable case.
In parallel, Margulis \cite{Margulis1991,Margulis2000}
observed that the Littlewood Conjecture 
is intimately interconnected with the
properties of orbits of multi-parameter actions
on the space of lattices.
This profound observation ultimately 
culminated in the celebrated work of 
Einsiedler, Katok, and Lindenstrauss 
\cite{ekl}, which showed that the set 
of exceptions to the Littlewood Conjecture
has zero Hausdorff dimension.
Interestingly, certain dynamical ideas were
already present in the initial work of
Cassels and Swinnerton-Dyer \cite{Cassels1955},
where the action by the group of units of the cubic number field was exploited.
An analogous action by a group of $S$-units will also play an important 
role in the present paper.

A natural and significant extension of the Littlewood Conjecture was posed by Cassels~\cite{Cas59}. Cassels asked whether the Littlewood Conjecture remains valid under arbitrary inhomogeneous shifts. This fully inhomogeneous formulation, widely known as Cassels' problem, asks if there exist $\alpha, \beta \in \R$ such that for \emph{all} shifts $\delta,\kappa \in \R$,
\begin{equation}\label{eq:littlewood2}
\liminf_{|q| \to \infty} |q| \cdot \left<q\alpha+\delta\right> \cdot \left<q\beta +\kappa\right> = 0.
\end{equation}
For over half a century, it was unknown whether a single pair of numbers satisfied this condition. A complete resolution to Cassel's question was finally achieved by Shapira in \cite{shapiraAnnals}. Using 
dynamics of the space of affine lattices,
Shapira established
that 
for Lebesgue-almost every pair $(\alpha, \beta) \in \R^2$, property \eqref{eq:littlewood2} holds for \emph{all} $\delta,\kappa \in \R$. An explicit rate of convergence for Shapira's result was obtained in a work of Gorodnik and Vishe \cite{AlexPankaj}.
A recent important advance is the work of 
Chow and Zafeiropoulos \cite{CZ2021},
who have developed a far-reaching generalisation of the 
Pollington--Velani framework that allows
to treat inhomogeneous target.
They have established that {a strong version} of \eqref{eq:littlewood2} holds 
when $(\alpha, \beta)$ belongs to large subsets of
badly approximable pairs for \emph{all} $\delta,\kappa \in \R$.

Motivated by these classical real-variable results, a rich parallel theory has been developed, incorporating non-Archimedean metrics. De-Mathan and Teuli\'e \cite{detuli} proposed a $p$-adic analogue of the Littlewood Conjecture, now known as the Mixed Littlewood Conjecture which states 
that 
for any prime number $p$ and  $\alpha \in \R$,
\begin{equation}\label{eq:littlewoodp}
\liminf_{q \to +\infty} q \cdot \left<q\alpha\right> \cdot |q|_p = 0,
\end{equation}
where $| \cdot |_p$ represents the usual $p$-adic absolute value.
De Mathan and Teuli\'e proved their conjecture for all quadratic irrationals. From a metric standpoint, this conjecture is overwhelmingly generic: it obviously holds for Lebesgue-almost all real numbers $\alpha$. 
Adopting the techniques of \cite{ekl},
Einsiedler and Kleinbock
\cite{EinsiedlerKleinbock}
showed that the set of possible
exceptions for $\alpha$
 has zero Hausdorff dimension. 
This approach also led to the result of 
Badziahin, Bugeaud, Einsiedler, and Kleinbock 
\cite{BBEK2015}, who established \eqref{eq:littlewoodp}
when the continued fraction expansion of $\alpha$ has exponential complexity. 
Furthermore, Haynes, Jensen, and Kristensen \cite{HJK2014} generalized the framework of 
\cite{PollingtonVelani2000} to this setting.
Despite these remarkable advances, the search for an explicit counterexample also remains an active frontier. 

As with the classical setting, the Mixed Littlewood Conjecture has naturally evolved to encompass inhomogeneous targets. Bugeaud, Drmota, and de Mathan \cite{BDM2007} conjectured that 
for every $\alpha \in \R$ and $\kappa \in \Zp$,
\begin{equation}\label{eq:littlewoodpp}
\liminf_{q \to +\infty} q \cdot \left<q\alpha\right> \cdot |q +\kappa|_p = 0.
\end{equation}
They  demonstrated validity of \eqref{eq:littlewoodpp} for specific uncountable classes of badly approximable numbers characterized by combinatorial word patterns in their continued fraction expansions. 
Furthermore, it is 
natural to consider doubly/fully inhomogeneous targets analogue of \eqref{eq:littlewood2}, which is the main focus of this work.
The main result of the paper is a \textit{mixed/p-adic} analogue
of the main result of \cite{shapiraAnnals}:

\begin{theorem}\label{thm: main}
    For almost every $\alpha\in \R$, 
    \begin{equation}\label{eq: inhomogenuous liminf intro}
\forall \delta\in[0,1) \textrm{ and }\;\forall \kappa\in\ZZ_p,\;\;\liminf_{\av{q}\rightarrow +\infty}q\inn{q\alpha+\delta}\abs{q+\kappa}_p=0.
	\end{equation}
    Moreover \eqref{eq: inhomogenuous liminf intro} holds for every quadratic irrational $\alpha$.
\end{theorem}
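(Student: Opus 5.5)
Following Shapira's approach, I would recast the statement as a problem about orbits on a space of affine lattices and deduce it from a rigidity statement for orbit closures.

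\textbf{Dynamical reformulation.} Work in the $S$-arithmetic setting with $S=\{\infty,p\}$. Put $G=\GL_2(\RR)\times\GL_2(\QQ_p)$ and $\Gamma=\GL_2(\zloverp)$ embedded diagonally, and let $X$ be the space of affine $\zloverp$-lattices in $\RR^2\times\QQ_p^2$. To $\alpha$, $\delta$, $\kappa$ attach the affine lattice
\[
x=g_\alpha\zloverp^2+v,\qquad g_\alpha=\left(\matr{1}{\alpha}{0}{1},\ \mathrm{id}\right),
\]
with translation $v$ built from $(\delta,\kappa)$. The product $q\inn{q\alpha+\delta}\abs{q+\kappa}_p$ is then, up to bounded factors, a product of coordinates of vectors of $x$. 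It is invariant under the semigroup generated by the real diagonal flow $a_t=\diag(e^t,e^{-t})$ and the $p$-adic element $\diag(p,p^{-1})$, with the appropriate mixed normalisation. Call this semigroup $A$.

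A Dani-type correspondence then reduces the statement to the following: if the $A$-orbit closure of $x$ contains an affine lattice meeting the "cross" in a point other than the trivial one, then the $\liminf$ is $0$. So it suffices to show that for our $\alpha$, every orbit closure $\overline{A x}$, over all translations $v$, contains a point whose grid meets the coordinate axes. This holds, for instance, if $\overline{Ax}$ contains the full fibre of translations over some lattice in the closure of the homogeneous orbit.

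\textbf{Key steps.}
\begin{enumerate}[label=(\arabic*)]
\item First handle the homogeneous projection. For almost every $\alpha$, and for every quadratic irrational $\alpha$, the projection of the orbit to the space of lattices has a closure containing a compact periodic orbit of an $S$-arithmetic torus, or is dense. For a quadratic $\alpha$ this comes from the lattice $\TheLattice$ attached to $\Oo_K$ together with the $S$-unit group $\SemiGroup$.
\item On that periodic orbit, the fibre of translations is the torus $\RR^2\times\QQ_p^2/\zloverp^2$. The semigroup generated by $\theta$ (a unit) and $\zeta$ (an $S$-unit) acts on this fibre as a higher-rank abelian semigroup of toral endomorphisms.
\item Apply a Berend-type rigidity theorem for $S$-arithmetic solenoids. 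Any closed invariant set in the fibre is then either finite, consisting of rational (torsion) points, or the whole solenoid.
\item Starting from an arbitrary $(\delta,\kappa)$, show that the fibre part of $\overline{Ax}$ is infinite, hence everything. Then choose a translation landing on an axis, which gives the $\liminf$ equal to $0$.
\item Treat the remaining case, where the translation is torsion, by direct congruence arguments. This amounts to the inhomogeneous de Mathan–Teuli\'e argument with a rational shift.
\end{enumerate}

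\textbf{Passing to almost every $\alpha$.} For generic $\alpha$, the real geodesic orbit of $g_\alpha$ equidistributes, or at least accumulates on every periodic $S$-adic torus orbit, and in particular on one of the quadratic type above. I would combine this with the semicontinuity argument of Shapira: accumulation on a periodic orbit together with infiniteness of fibres gives the same conclusion. Here the full measure set is chosen independently of $(\delta,\kappa)$ by using countably many quadratic fields.

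\textbf{Main obstacle.} The hardest part is step (4) together with the uniformity over all $(\delta,\kappa)$. We must exclude the possibility that the limiting fibre set stays finite for non-torsion translations. This requires controlling how the fibre component behaves along the approach to the periodic orbit, via a shadowing or "unipotent drift" argument. The $p$-adic direction is where the torsion/rational subtleties of $\zloverp$ appear, and I would try to resolve them using the contracting $\zeta$-action on $\ZZ_p$.
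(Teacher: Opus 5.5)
Your plan has two genuine gaps: the rigidity dichotomy in steps (3)--(4) is false for the semigroup this problem forces on you, and your treatment of quadratic $\alpha$ does not produce what is needed.

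\textbf{The dichotomy fails.} Every element of $A$ acts on the first $p$-adic coordinate by a $p$-adic unit; this is what forces $q_0\in\ZZ$ in the Dani correspondence. So $\theta=\sigma(\epsilon_\infty^2)$ and $\zeta=\sigma(\tau_2^{-2})$ act on $\underline{e}_3$ by the units $a^{-1}$ and $b$, and no element of $\theta^{\ZZ}\zeta^{\NN}$ expands that direction. Hence $V_p=\pi(\ZZ_p\underline{e}_3)$ is an infinite, proper, closed invariant subgroup of $\mathcal{T}_p$. Points such as $\pi(p^{-k}s\underline{e}_3)$ have infinite but non-dense orbits. So ``finite (torsion) or the whole solenoid'' is false, and Berend's results do not apply directly. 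The paper passes to $\overline{\mathcal{T}_p}=\mathcal{T}_p/V_p$ and proves only that \emph{minimal} closed invariant sets there are finite. This uses Berend's restricted-set theorem for $\mathcal F$-flows together with the fact that $B-B=\overline{\mathcal{T}_p}$ for every infinite invariant $B$.

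\textbf{The missing mechanism.} The paper then exploits \emph{finiteness}, not infiniteness, of the fibre orbit.
\begin{enumerate}
\item Density of $Ax$ in $X_{u_K}$ gives $a_n(x;v)=u_n^+u_n^-(x_K;w)$ with $u_n^\pm\to e$, $u_n^+\in U_\theta^+\setminus\{e\}$, and $w$ fixed modulo $V_p$ by $\theta^{\ell\ZZ}\zeta^{\ell\NN}$.
\item Renormalising by powers of $\theta^{\ell}$, and then by suitable $\theta^{\ell m}\zeta^{\ell n}$, blows the unstable displacement up to $u(z,0)(x_K;w)$ or $u(0,z)(x_K;w)$ in the orbit closure, with $z\neq0$.
\item Conjugating by $\theta^{\ell m}\zeta^{\ell n}$ fixes $w$ modulo $V_p$, which is harmless for $N$ and for $p$-adic integrality. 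It rescales $z$ by a dense set of positive factors, by multiplicative independence, and this yields vectors with arbitrarily small nonzero $N$.
\end{enumerate}
The fibre of $\overline{Ax}$ is never shown, or needed, to be infinite. The nontrivial unstable component $u_n^+\neq e$ is essential. So your fallback ``or at least accumulates on every periodic orbit'' is not enough; density, obtained from Mautner and Hopf, is what supplies that component.

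\textbf{The quadratic case.} First, $p$ need not split in $\QQ(\alpha)$, and building $x_K$ with a rank-three $S$-unit semigroup inside $A$ requires splitting. Second, even when $p$ splits, the $A$-orbit of $\Delta_{\alpha,1}$ is asymptotic to the orbit of an $S$-adic lift of the closed geodesic of $\alpha$ only along a contracted horospherical direction. This carries no unstable displacement, so neither your step (4) nor a drift argument gets started. Step (5) does not close the gap: without the dichotomy, the leftover cases are not just torsion translations, and no reduction to de Mathan--Teuli\'e is given.

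The paper avoids $\QQ(\alpha)$ altogether and proves $\overline{A\Delta_{\alpha,1}}=X_1$ in three steps.
\begin{enumerate}
\item The identity $b_n\Delta_{\alpha,1}=a_{-n\log p}\Delta_{p^{2n}\alpha,1}$ puts every $\Delta_{p^{2n}\alpha,1}$ in the closure.
\item Aka--Shapira's equidistribution theorem for $S$-adic lifts of the closed geodesics of $p^{2n}\alpha$, along a non-degenerate rational branch of the Hecke tree, gives density in $X_1$.
\item The $A$-equivariant left translation by $d_{u_K}$ transfers this to $X_{u_K}$, for an auxiliary field $K$ in which $p$ splits, and the generic-case proposition then applies.
\end{enumerate}
Your normalisation with trivial $p$-adic part needs the same twist, since it places your lattices in $X_1$ while $x_K\in X_{u_K}$.
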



\noindent\textbf{Acknowledgements:} M.A. was supported by SNF Grant number 10003145. 
A.G. and Y.Y. were supported by SNF grant 200020--212617.
In the early part of the project, P.V. was supported by EPSRC grant
EP/J018260/1. The authors would like to thank Manfred Einsiedler for useful discussions.

\subsection{Organisation of the paper}
Section \ref{subsec: nota} introduces the basic notation used in the paper.
In particular, the central role will be played by the space $X$ consisting of free rank-two covolume-one $\Z[\frac{1}{p}]$-modules in $\R^2\times\Q_p^2$
and the space $Y$ consisting of affine version of such modules,
which can be  naturally viewed as a bundle over $X$. 
In Section \ref{sec: dani},
we develop a correspondence which 
allows us to reformulate the original Diophantine property
in terms of orbit behaviour on the space $Y$. 
In Section \ref{sec: periodic}, we construct
particular elements $x_K$ in $X$ 
arising from quadratic number fields $K$
and observe that those elements are stabilised
by actions of certain higher-rank abelian semigroups.
The fibers of the elements $x_K$ in $Y$ are $p$-adic tori. 
In Section \ref{sec: dense orbits}, we study
actions of the higher-rank abelian semigroups
on the $p$-adic tori.
Finally, we prove the first part of the main theorem
in Section \ref{sec:gen} and 
the second part in Section \ref{sec: quad}.

\section{Notation}\label{subsec: nota}
Throughout the paper, $p$ denotes an arbitrary fixed prime number.
Let $X_\infty$ denote the space of
of two-dimensional lattices of covolume one in $\RR^2$, which
can be parametrized as 
$X_\infty\cong\on{SL}_2(\RR)/\on{SL}_2(\ZZ)$.
Similar parametrization also exists for 
free rank-two $\zloverp$-submodules in $\RR^2\times \QQ_p^2$, where a $\zloverp$-module structure on $\RR^2\times \QQ_p^2$ is defined by 
the diagonal embedding of $\zloverp$ into $\RR\times\QQ_p$. 
We call such modules   
 $\zloverp$-lattices (or simply lattices) in $\RR^2\times \QQ_p^2$. They are of the form
$$\Lambda=
\zloverp(v_1^\infty,v_1^{(p)})\oplus \zloverp (v_2^\infty,v_2^{(p)})\subset \RR^2\times \QQ_p^2,
$$ 
with
 $v_1^\infty,v_2^\infty\in \RR^2$ and $v_1^{(p)},v_2^{(p)}\in \QQ_p^2$, where
 the matrices $g_\infty$ and $g_p$ formed
 by the columns $v_1^\infty,v_2^\infty$ and
 $v_1^{(p)},v_2^{(p)}$ respectively satisfy
$$\det(g_\infty,g_p):=\abs{\det(g_\infty)}\abs{\det(g_p)}_p\neq 0. $$
 Here, the absolute values are the real and the $p$-adic ones respectively.
We say that $\Lambda$ has co-volume one if $\det(g_\infty,g_p)=1$. 
Let 
$$
\Gamma:=\GL_2(\zloverp)
$$ be the group consisting of matrices with entries in $\zloverp$ with determinant in $\zloverp^\times=\set{\pm p^n\colon n\in \ZZ}$.
We view $\Gamma$ as a subgroup of $\GL_2(\RR)\times\GL_2(\QQ_p)$
embedded diagonally. Then $\det(\gamma,\gamma)=1$ 
for any $\gamma\in \Gamma$.
We note that the lattice $\Lambda$
determines the pair $(g_\infty,g_p)$
uniquely up to multiplying 
$(g_\infty,g_p)$ on the right by an element of $\Gamma$. 
Therefore, the following homogeneous space serves as a moduli space for such covolume-one lattices  
\begin{equation}\label{eq:TheSpace}
    X:=G/\Gamma,
\end{equation}
where 
$$G:=
\set{(g_\infty,g_p)\in \GL_2(\RR)\times \GL_2(\QQ_p):\det(g_\infty,g_p)=1}.$$

Let $I$ denote the $2\times 2$ identity matrix and $\diag(a,b)$ denote the $2\times 2$ diagonal matrix with diagonal entries $a,b$.   
We claim that the space $X$ decomposes as the union of disjoint closed $\on{SL}_2(\RR\times\QQ_p)$-orbits
\begin{equation}
    X_u:=\on{SL}_2(\RR\times\QQ_p)(I,\diag(1,u))\GLZP,
\end{equation}
parametrized by $u\in \ZZ_p^\times$.
Indeed, any $x\in X$ can be written as  $x=(g_\infty,g_p)\GLZP$ with $\det g_\infty=1$ and $\av{\det g_p}_p=1$ by multiplying the coset representative on the right by $(\gamma,\gamma)\in \GLZP $ where $\gamma=\diag ((\det g_\infty)^{-1},1)$. Setting $u=\det g_p$, we see that
$x$ is contained in the $\on{SL}_2(\RR\times\QQ_p)$-orbit $X_u$. The sets $X_u$ are closed because they are fibers
of the map $(g_\infty,g_p)\Gamma\mapsto (\det g_\infty)|\det g_p|_p$. We note that $X_u$ is isomorphic to $\on{SL}_2(\RR\times\QQ_p)/\Gamma_u$ for a lattice $\Gamma_u$ in $\on{SL}_2(\RR\times\QQ_p)$. 

For $x\in X$, we denote by $\Lambda_x$ the corresponding lattice in $\RR^2\times \QQ_p^2$. 
Affine lattices (or grids) in $\RR^2\times \QQ_p^2$ are the sets of the form $\Lambda+v$ with 
a lattice $\Lambda\subset\RR^2\times \QQ_p^2$ and $v\in \RR^2\times \QQ_p^2$.
The set of affine covolume-one lattices are parametrized
by the homogeneous space
\begin{equation}\label{eq:Y}
    Y:=\left(\GLT\ltimes (\RR^2\times\QQ_p^2)\right)/\left(\GLZP \ltimes \zloverp^2\right).
\end{equation}
We note that the map $\Lambda+v\mapsto \Lambda$
defines a bundle structure on $Y$ with the base $X$ and  
the fibers over $x\in X$ being the $p$-adic tori
$(\RR^2\times \QQ_p^2)/\Lambda_x$.


\section{$p$-adic Dani correspondence}\label{sec: dani}


In this section, we develop a connection between the inhomogeneous $p$-adic Littlewood problem and properties
of orbits of the semigroup 
\begin{equation}\label{eq:A} 
		\pdiag :=\left\{\left(p^{-n}\begin{pmatrix}
			e^t & 0\\
			0   & e^{-t}
		\end{pmatrix},
		\begin{pmatrix}
			a & 0\\
			0 & a^{-1}p^{-2n}
		\end{pmatrix}\right):t\in \RR,a\in \ZZ_p^{\times},n\in \NN\right\}
	\end{equation}
acting on the space $Y$. It turns out that solving Diophantine inequalities in this setting can be 
reduced to establishing that the corresponding $A$-orbits visit certain subsets of $Y$.

Throughout this section, we will treat elements of $\RR^2\times \QQ_p^2$ as pairs of column vectors
and introduce a function $N$ on $\RR^2\times \QQ_p^2$:
	\begin{equation}
		N\left(\colf{x_1}{x_2},\colf{x_3}{x_4}\right):=\abs{x_1x_2}\cdot \abs{x_4}_p.
	\end{equation}	
Let 
\begin{align*}
\mathcal{O}_\varepsilon&:=
\left\{x\in  (-\sqrt{\varepsilon},\sqrt{\varepsilon})^2 \times \ZZ_p^2:\, x_1,x_2,x_4\ne 0\right\}, \\
\Omega_\varepsilon&:=\{\Lambda\in Y: \Lambda\cap \mathcal{O}_\varepsilon\ne \emptyset\}.
\end{align*}

\begin{lemma}\label{l:NA} 
Let $x\in \RR^2\times \QQ_p^2$.
\begin{enumerate}
\item[(a)]  If $Ax\cap \mathcal{O}_\varepsilon\ne \emptyset$, then $0<N(x)<\varepsilon$.
\item[(b)]  If $0<N(x)<\varepsilon$ and $x_3\in\ZZ_p$, $0\neq x_4\in\ZZ_p$, then
$Ax\cap \mathcal{O}_{p^2\varepsilon}\ne \emptyset$.
\end{enumerate}    
\end{lemma}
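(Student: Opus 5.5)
The plan rests on one observation: $N$ is invariant under $A$. We then check membership in $\mathcal{O}_\varepsilon$ coordinate by coordinate. Take $a_{t,a,n}\in A$ with parameters $t\in\RR$, $a\in\ZZ_p^\times$, $n\in\NN$. It sends
$$\left(\colf{x_1}{x_2},\colf{x_3}{x_4}\right)\ \text{to}\ \left(\colf{p^{-n}e^tx_1}{p^{-n}e^{-t}x_2},\colf{ax_3}{a^{-1}p^{-2n}x_4}\right).$$
Hence
$$N(a_{t,a,n}x)=p^{-2n}\abs{x_1x_2}\cdot\abs{a^{-1}}_p\,p^{2n}\abs{x_4}_p=N(x),$$
since $\abs{p^{-2n}}_p=p^{2n}$ and $\abs{a}_p=1$.

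For (a), suppose $y=a_{t,a,n}x\in\mathcal{O}_\varepsilon$. Then $y_1,y_2,y_4\neq0$, so $N(y)>0$. Also $\abs{y_1y_2}<\varepsilon$ and $\abs{y_4}_p\le 1$, so $N(y)<\varepsilon$. Invariance gives $0<N(x)<\varepsilon$.

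For (b), we need to choose $t$ and $n$ so that all four coordinates land in the right place. Take $a=1$. The $p$-adic conditions force $n$ to be small: $ax_3=x_3\in\ZZ_p$ holds automatically, while $p^{-2n}x_4\in\ZZ_p$ needs $\abs{x_4}_p p^{2n}\le 1$. Write $\abs{x_4}_p=p^{-k}$ with $k\ge0$, which is possible because $0\ne x_4\in\ZZ_p$. Then the admissible values are exactly $n\le k/2$, and we take $n=\lfloor k/2\rfloor\in\NN$, so that $p^{2n}\ge p^{k-1}$.

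It remains to choose $t$ for the real coordinates. The product of the new real coordinates is $p^{-2n}\abs{x_1x_2}$. We choose $t$ to balance them, i.e. $e^t\abs{x_1}=e^{-t}\abs{x_2}$, which is possible since $x_1,x_2\neq0$ (as $N(x)>0$). Both coordinates then have absolute value $p^{-n}\sqrt{\abs{x_1x_2}}$. Using $\abs{x_1x_2}=N(x)p^{k}<\varepsilon p^k$, we get
$$p^{-2n}\abs{x_1x_2}<\varepsilon p^{k-2n}\le p\,\varepsilon\le p^2\varepsilon.$$
So both real coordinates lie in $(-\sqrt{p^2\varepsilon},\sqrt{p^2\varepsilon})$. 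They stay nonzero, and the fourth coordinate is nonzero and in $\ZZ_p$. Therefore the image lies in $\mathcal{O}_{p^2\varepsilon}$.

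The only delicate step is this parity and rounding issue. Because $n$ must be a natural number with $2n\le k$, the $p$-adic scaling cannot exactly absorb $\abs{x_4}_p$, and the resulting loss of at most a factor $p$ is what the $p^2$ in the statement accommodates.
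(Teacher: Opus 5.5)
Your proof is correct and follows the paper's argument. Part (a) comes from the $A$-invariance of $N$. For part (b), your choice $n=\lfloor k/2\rfloor$ is exactly the paper's condition $p^{-2}<p^{2n}\abs{x_4}_p\le 1$, and $t$ is chosen to balance the two real coordinates. Your bookkeeping actually places the image in $\mathcal{O}_{p\varepsilon}$, slightly better than the stated $\mathcal{O}_{p^2\varepsilon}$.
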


\begin{proof}
Part  $(a)$ is straightforward because the function $N$ is $A$-invariant. For Part $(b)$,
we choose $n_0\in\NN$ and $t_0\in\RR$ such that
$p^{-2}<p^{2n_0}|x_4|_p\le 1$ and $e^{t_0}|x_1|=e^{-t_0}|x_2|$. Let $a\in A$ be any element corresponding to these parameters $n_0$ and $t_0$ as appearing in \eqref{eq:A} .
Then $x':=ax$ satisfies
$
0<|x'_1|,|x'_2|< \sqrt{p^2\varepsilon},$
$|x'_3|_p\le 1,$ and
$0<|x'_4|_p\le 1,$
so that $x'\in \mathcal{O}_{p^2\varepsilon}$.
\end{proof}

For $\overline v=(v_1,v_2,v_3)\in \RR^2\times \QQ_p$ and for $u\in \ZZ_p^\times$, we introduce the grid
	\begin{equation}\label{eq:Lambda}
		\Lambda_{\overline v,u}:=\left(\begin{pmatrix}
			1 & 0\\
			v_1 & 1
		\end{pmatrix},\begin{pmatrix}
			u & 0\\
			0 & 1
		\end{pmatrix}\right)\zloverp^2+\left(\colf{0}{v_2},\colf{0}{v_3}\right).
	\end{equation}
With this notation we formulate the main result, which connects the original arithmetic question to a dynamical problem involving $A$-orbits of the grids  \eqref{eq:Lambda} visiting the subsets $\Omega_\varepsilon$.

\begin{proposition}[$p$-adic Dani correspondence]
\label{p:dani}
Let $\alpha\in (0,1)$, $\delta\in[0,1)$,  $\kappa\in\ZZ_p$. We set 
$\overline{v}:=(1/\alpha,\delta/\alpha,\kappa)$. Then:
\begin{enumerate}
    \item[(a)] If for $\varepsilon\in (0,\varepsilon_0(\alpha,\delta,\kappa))$, 
    $$
    A\cdot \Lambda_{\overline v,u}\cap \Omega_\varepsilon\ne \emptyset
    \quad\hbox{for some $u\in \ZZ_p^\times$,}
    $$
    then
    there exists $q\in \ZZ\backslash \{0\}$ such that 
    $$
   0<|q|\inn{q\alpha+\delta}|q+\kappa|_p<3\varepsilon. 
    $$
    \item[(b)] Conversely, 
    if 
    $$
   0<|q|\inn{q\alpha+\delta}|q+\kappa|_p<\varepsilon 
    $$
    has a solution $q\in \ZZ\backslash \{0\}$ for some 
    $\varepsilon\in (0,\varepsilon_0(\alpha,\delta,\kappa))$, 
    then
    $$
    A\cdot \Lambda_{\overline v,u}\cap \Omega_{p^2(1+2\alpha^{-1})\varepsilon}\ne \emptyset\quad\hbox{for every $u\in \ZZ_p^\times$.
}
    $$
\end{enumerate}
\end{proposition}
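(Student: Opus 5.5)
The plan is to compute the points of $\Lambda_{\overline v,u}$ explicitly and compare $N$ on them with the Diophantine product. Lemma~\ref{l:NA} then converts information about $N$ into information about $A$-orbits meeting $\Omega_\varepsilon$.

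\textbf{The grid points.} For $(m,n)\in\zloverp^2$, the corresponding point of $\Lambda_{\overline v,u}$ is
$$x=\left(\colf{m}{m/\alpha+n+\delta/\alpha},\colf{um}{n+\kappa}\right).$$
Hence
$$N(x)=\alpha^{-1}|m|\,|m+n\alpha+\delta|\,|n+\kappa|_p .$$
With $n=q$ and $-m$ the integer nearest to $q\alpha+\delta$, we have $|m+q\alpha+\delta|=\inn{q\alpha+\delta}$ and $|m|\approx\alpha|q|$. So $N(x)\approx |q|\inn{q\alpha+\delta}|q+\kappa|_p$, and the whole proof consists of making this comparison precise with explicit constants.

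\textbf{Part (a).} Suppose $a x\in\mathcal O_\varepsilon$ for some $a\in A$ with parameters $(t,a',k)$ as in \eqref{eq:A}.
\begin{enumerate}
\item[1.] \emph{Integrality.} The $p$-adic coordinates of $ax$ are $a'um$ and $a'^{-1}p^{-2k}(n+\kappa)$, and both lie in $\ZZ_p$. Since $a'u\in\ZZ_p^\times$, we get $m\in\zloverp\cap\ZZ_p=\ZZ$. Since $n+\kappa\in p^{2k}\ZZ_p\subset\ZZ_p$ and $\kappa\in\ZZ_p$, we get $n\in\ZZ$.
\item[2.] \emph{Nondegeneracy.} The definition of $\mathcal O_\varepsilon$ gives $m\neq0$, $\eta:=|m+n\alpha+\delta|\neq0$ and $n+\kappa\ne0$.
\item[3.] \emph{Smallness of $N$.} Lemma~\ref{l:NA}(a) gives $0<N(x)<\varepsilon$. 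Using $|m|\ge1$ and $|n+\kappa|_p\le1$, this forces $\eta<\alpha\varepsilon$.
\item[4.] \emph{Choice of $\varepsilon_0$.} Now choose $\varepsilon_0(\alpha,\delta,\kappa)$ small enough to exclude finitely many degenerate situations; this is the step needing care.
\begin{enumerate}
\item[(i)] If $n=0$, then $N(x)=\alpha^{-1}|m||m+\delta||\kappa|_p$ with $m\neq0$. The factor $|m||m+\delta|$ is bounded below by a positive constant depending on $\delta$, so $N(x)$ is bounded below and small $\varepsilon$ rules this out.
\item[(ii)] For the finitely many $n$ with $|n|<1/\alpha$, the nonzero values of $|m+n\alpha+\delta|$ are bounded below. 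This holds even if some $n\alpha+\delta$ is an integer, since then $\eta$ is a nonzero integer. So small $\varepsilon$ forces $|n|\alpha\ge1$.
\end{enumerate}
\item[5.] \emph{Conclusion.} Set $q=n$. Since $\eta<1/2$, we have $\eta=\inn{q\alpha+\delta}>0$. Moreover $\alpha|q|\le |m|+\delta+\eta\le |m|+2\le 3|m|$. Therefore
$$|q|\inn{q\alpha+\delta}|q+\kappa|_p\le 3\alpha^{-1}|m|\,\eta\,|q+\kappa|_p=3N(x)<3\varepsilon .$$
\end{enumerate}

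\textbf{Part (b).} Given a solution $q$, fix any $u\in\ZZ_p^\times$.
\begin{enumerate}
\item[1.] Take $n=q$ and $m=-\text{(nearest integer to }q\alpha+\delta)$. Shrinking $\varepsilon_0$ as in part (a) ensures $m\ne0$ and $|q|\ge1$.
\item[2.] Then $|m|\le\alpha|q|+2$, so
$$N(x)\le(|q|+2\alpha^{-1})\inn{q\alpha+\delta}|q+\kappa|_p<(1+2\alpha^{-1})\varepsilon.$$
Positivity of $N(x)$ follows from $m\neq0$, $\inn{q\alpha+\delta}>0$ and $q+\kappa\neq0$.
\item[3.] The hypotheses of Lemma~\ref{l:NA}(b) hold: $x_3=um\in\ZZ_p$ and $0\ne x_4=q+\kappa\in\ZZ_p$. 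The lemma gives $Ax\cap\mathcal O_{p^2(1+2\alpha^{-1})\varepsilon}\neq\emptyset$, that is, $A\cdot\Lambda_{\overline v,u}\cap\Omega_{p^2(1+2\alpha^{-1})\varepsilon}\neq\emptyset$.
\end{enumerate}

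The main obstacle is the integrality and nondegeneracy bookkeeping in part (a). Getting $m,n\in\ZZ$ out of the $p$-adic condition is routine. The real care lies in choosing $\varepsilon_0(\alpha,\delta,\kappa)$ so that small $|q|$, the case $q=0$, and $q\alpha+\delta\in\ZZ$ are excluded; only then is the comparison $|m|\asymp\alpha|q|$ valid with constant $3$.
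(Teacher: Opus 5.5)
Your part (b) is correct and is essentially the paper's argument. In part (a), your integrality and nondegeneracy steps are fine, and so is your Step 5, which is the paper's first case $|m+n\alpha+\delta|\le 1/2$.

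\textbf{The gap is Step 3 of part (a).} From $N(x)=\alpha^{-1}|m|\,\eta\,|n+\kappa|_p<\varepsilon$ and $|m|\ge 1$ you only get $\eta\,|n+\kappa|_p<\alpha\varepsilon$. The bound $|n+\kappa|_p\le 1$ points the wrong way and gives no upper bound on $\eta$. Nor is $|n+\kappa|_p$ bounded below: the condition $a'^{-1}p^{-2k}(n+\kappa)\in\ZZ_p$ only says $|n+\kappa|_p\le p^{-2k}$, with $k$ arbitrary.

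Concretely, suppose $-\kappa$ has infinitely many nonzero $p$-adic digits but arbitrarily long blocks of zero digits. Take $m=1$ and let $n\in\ZZ$ be a truncation of $-\kappa$ just before such a block. Then $|n|\,|n+\kappa|_p$, and hence $N(x)$, is as small as you like, while $\eta=|1+n\alpha+\delta|$ is of size $\alpha|n|$, far above $1/2$. By Lemma~\ref{l:NA}(b), an $A$-translate of this point lies in $\mathcal{O}_\varepsilon$ for every $\varepsilon>p^2N(x)$. So the hypothesis of (a) is met by a point with $\eta\neq\langle n\alpha+\delta\rangle$ and $\alpha|n|>3|m|$, and your Step 5 says nothing about it.

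This is precisely the second case of the paper's proof, and it needs its own estimate. When $\eta>1/2$, one shows $|m|\,\eta\ge\alpha|n|/6$ by splitting into three subcases:
\begin{itemize}
\item $|m|\ge\alpha|n|/3$;
\item $|n|\le 3/\alpha$;
\item otherwise, $\eta\ge\alpha|n|-|m|-1\ge\alpha|n|/3$.
\end{itemize}
Together with $N(x)<\varepsilon$ this gives $|n|\,|n+\kappa|_p<6\varepsilon$. Then $\langle n\alpha+\delta\rangle\le 1/2$ yields $|q|\langle q\alpha+\delta\rangle|q+\kappa|_p<3\varepsilon$ for $q=n$.

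In this case nothing bounds $\langle n\alpha+\delta\rangle$ from below, so the strict inequality $0<\cdots$ needs a further exclusion. The paper's write-up does not address this either. For irrational $\alpha$, at most one $n$ has $n\alpha+\delta\in\ZZ$. If that $n$ arises at all, then $n\neq 0$ and $n+\kappa\neq 0$, so $|n|\,|n+\kappa|_p$ is a fixed positive constant, and shrinking $\varepsilon_0$ below a sixth of it handles it.

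Your Step 4(ii) also rests on the faulty Step 3, but it is not needed: Step 5 derives $\alpha|q|\le 3|m|$ without it.
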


\begin{proof}
We begin by verifying the assertion $(a)$. Suppose that for some $a\in A$ and $u\in \ZZ_p^\times$,
we have
$a\cdot \Lambda_{\overline v,u}\in \Omega_\varepsilon$.
Recalling \eqref{eq:A} and \eqref{eq:Lambda}, we see 
that the union of the grids $a\cdot \Lambda_{\overline v,u}$
consists of the elements
\begin{align*}
x=\cvmatr{p^{-n}e^{t}q_0}{p^{-n}e^{-t}(q_0v_1+q_1+v_2)}{bu q_0}{p^{-2n}b^{-1}(q_1+v_3)},\quad\hbox{$q_0,q_1\in \zloverp$},
\end{align*}
with $n\in\NN$, $t\in\RR$, and $b\in \ZZ_p^\times$.
We deduce from Lemma \ref{l:NA}$(a)$  that there exists an $x$ satisfying 
\begin{equation*}
0 <N(x)= |q_0|\cdot |q_0v_1+q_1+v_2|\cdot |q_1+v_3|_p <\varepsilon\quad\hbox{and}\quad x_3,x_4\in \ZZ_p. 
\end{equation*}
A priori, we only know that $q_0,q_1\in \zloverp$. 
Additionally,
since 
$$
x_3=buq_0\in \ZZ_p,
$$
we conclude that 
$q_0\in\zloverp\cap\ZZ_p=\ZZ$. Similarly, 
since 
$$
x_4=p^{-2n}b^{-1}(q_1+v_3)\in\ZZ_p
\quad\hbox{and}\quad v_3=\kappa\in\ZZ_p,
$$
we obtain that $q_1\in\ZZ$.
Therefore, it follows  that 
\begin{equation}\label{eq:1__}
0<|q_0|\cdot |q_0+q_1\alpha+\delta|\cdot |q_1+\kappa|_p <\alpha\varepsilon
\end{equation}
has a solution $q_0\in \ZZ\backslash \{0\}$, $q_1\in \ZZ$.

Suppose that we have such a solution with $q_1=0$.
Then $\kappa\ne 0$.
If $\delta =0$, this product is at least $|\kappa|_p$. 
If $\delta \in (0,1)$, this product is at least $\left<\delta\right>|\kappa|_p$.
We now set
\begin{equation*}
    \ve_0(\alpha,\delta,\kappa):=\begin{cases}
        |\kappa|_p/2,&\textrm{ if }\delta=0,\\
        \langle\delta\rangle|\kappa|_p/2,&\textrm{ otherwise. }
    \end{cases}
\end{equation*}
Hence, if $\varepsilon<\ve_0(\alpha,\delta,\kappa)$, then a solution $q_0,q_1\in\ZZ$ to \eqref{eq:1__} must further satisfy $q_1\ne 0$. We work with such an $\varepsilon_0$ throughout now.

We analyze following two cases separately.
Suppose that $|q_0+q_1\alpha+\delta|\leq \frac12$. 
Then $|q_0+q_1\alpha+\delta|=\langle q_1 \alpha+\delta\rangle$, and the triangle inequality implies that $\alpha|q_1| \le 3|q_0|$. Thus, we conclude that 
\begin{equation}\label{eq:2}
0<|q_1| \langle q_1 \alpha+\delta\rangle |q_1+\kappa|_p <3\varepsilon,
\end{equation}
as required. 

Now suppose that $|q_0+q_1\alpha+\delta|> \frac12$.
In this case, we claim that
\begin{equation}\label{eq:q0q1gamma1}
|q_0||q_0+q_1\alpha+\delta|\ge \alpha|q_1|/6.
\end{equation}
If $|q_0|\geq \alpha|q_1|/3$, then our assumption clearly implies \eqref{eq:q0q1gamma1}.
Furthermore, if $|q_1|\leq  3/\alpha$, then  \eqref{eq:q0q1gamma1} again follows from our assumption.
In the remaining case, when $|q_1|\geq 3/\alpha$ and $|q_0|\leq \alpha|q_1|/3$, the triangle inequality implies that
\begin{equation*}
    |q_0+q_1\alpha+\delta|\geq \alpha|q_1|-|q_0|-1\geq \alpha|q_1|/3,
\end{equation*}
which also implies \eqref{eq:q0q1gamma1}.
Ultimately, we deduce from \eqref{eq:1__} and \eqref{eq:q0q1gamma1} that 
$$
0<|q_1|\cdot |q_1+\kappa|_p <6\varepsilon,
$$
and since $\langle q_1\alpha+\delta\rangle\in [0,1/2]$, this implies \eqref{eq:2} and verifies part (a).

Next we deal with part (b). Our assumption implies that 
there exist $q_0\in\ZZ$ and $q_1\in\ZZ\backslash \{0\}$ such that 
\begin{equation}\label{eq:3}
   0<|q_1|\cdot |q_0+q_1\alpha+\delta|\cdot |q_1+\kappa|_p<\varepsilon\quad\hbox{and}\quad
   |q_0+q_1\alpha+\delta|\le 1/2.
\end{equation}
The triangle inequality implies that $|q_0|\le (\alpha+2)|q_1|$, so that we derive the bound
$$
   |q_0|\cdot |q_0v_1+q_1+v_2|\cdot |q_1+v_3|_p<(1+2\alpha^{-1})\varepsilon.
$$
Suppose that the only solutions \eqref{eq:3} arise when $q_0=0$. In this case, we observe that $q_1$ must satisfy $|q_1|\leq 2/\alpha$, and this gives only finitely many choices for $q_1$. Therefore, the set $\{0\neq |q_1|\cdot |q_1\alpha+\delta|\cdot |q_1+\kappa|_p:|q_1|\leq 2/\alpha\}$ is finite and has a finite lower bound, which we call $\ve_0(\alpha,\delta,\kappa)$. Note that for any
$\varepsilon<\ve_0(\alpha,\delta,\kappa)$ as above, a solution to \eqref{eq:3} must have $q_0\ne 0$.
This implies that the grid $\Lambda_{\bar v,u}$ 
contains an element $y$ such that 
$$
0<N(y)=|y_1y_2|\cdot |y_4|_p< (1+2\alpha^{-1})\varepsilon,
\quad
|y_3|_p\le 1, \quad 0\neq |y_4|_p\leq 1.
$$
By Lemma \ref{l:NA}$(b)$,  
$A\cdot \Lambda_{\bar v,u}\cap  \Omega_{p^2(1+2\alpha^{-1})\varepsilon}\ne \emptyset$.
This verifies $(b)$.
\end{proof}
As a corollary, we obtain 
\begin{Corollary}\label{lem: dani}
Given $\alpha\in (0,1)$, $\delta\in[0,1)$, $u_0\in \ZZ_p^\times$ and $\kappa\in\ZZ_p$, let $\overline{v}:=(1/\alpha,\delta/\alpha,\kappa)\in\RR^2\times\ZZ_p$. If
    \begin{equation*}
		\inf\set{0\neq N(u)\colon u=\cvmatr{u_1}{u_2}{u_3}{u_4}\in A \Lambda_{\overline v,u_0},  u_3,u_4\in\ZZ_p}=0,
	\end{equation*}
    then
	\begin{equation*}\label{eq:Victory}
\liminf_{|q|\rightarrow\infty}|q|\inn{q\alpha+\delta}|q+\kappa|_p=0.
	\end{equation*}
\end{Corollary}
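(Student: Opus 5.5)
The plan is to run the argument of Proposition~\ref{p:dani}(a) directly, keeping only the parts that produce integer solutions, and then to show that small values of the product force $|q|\to\infty$. Fix $\varepsilon>0$ small. By hypothesis there is some $a\in A$ and an element $x\in a\Lambda_{\overline v,u_0}$ with $x_3,x_4\in\ZZ_p$ and $0<N(x)<\varepsilon$. I will not need to pass through $\Omega_\varepsilon$. As in the proof of Proposition~\ref{p:dani}(a), write
\[
x=\cvmatr{p^{-n}e^{t}q_0}{p^{-n}e^{-t}(q_0v_1+q_1+v_2)}{bu_0 q_0}{p^{-2n}b^{-1}(q_1+v_3)}.
\]
The conditions $x_3,x_4\in\ZZ_p$ together with $\kappa\in\ZZ_p$ give $q_0,q_1\in\ZZ$. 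By $A$-invariance of $N$ we then get
\[
0<|q_0|\,|q_0+q_1\alpha+\delta|\,|q_1+\kappa|_p<\alpha\varepsilon .
\]
Here $q_0\neq 0$, and $q_1+\kappa\neq0$ in $\ZZ_p$.

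For $\varepsilon<\varepsilon_0(\alpha,\delta,\kappa)$, the same case analysis as in Proposition~\ref{p:dani}(a) applies. First, $q_1\neq 0$. Second, splitting according to whether $|q_0+q_1\alpha+\delta|\le 1/2$, we obtain
\[
0\le |q_1|\inn{q_1\alpha+\delta}|q_1+\kappa|_p<6\varepsilon .
\]
Letting $\varepsilon\to0$ produces a sequence of integers $q^{(k)}\neq 0$ with $|q^{(k)}|\inn{q^{(k)}\alpha+\delta}|q^{(k)}+\kappa|_p\to 0$.

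The main obstacle is ensuring $|q^{(k)}|\to\infty$, which is needed to conclude that the $\liminf$ is $0$. I would argue by contradiction. Suppose infinitely many of the $q^{(k)}$ lie in a bounded set; passing to a subsequence, some fixed $q$ occurs infinitely often. Then $|q|\inn{q\alpha+\delta}|q+\kappa|_p=0$, and since $q\neq0$ and $q+\kappa\neq0$ (from $x_4\neq0$), this forces $\inn{q\alpha+\delta}=0$. This exceptional situation has to be ruled out. In the first case of the dichotomy, $|q_0+q_1\alpha+\delta|=\inn{q_1\alpha+\delta}$ and $N(x)>0$, so the value is strictly positive and bounded below by a positive constant depending on $q_1$. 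Hence a fixed $q_1$ cannot produce arbitrarily small positive values, and $|q_1|\to\infty$ along that case. In the second case, the estimate $|q_1||q_1+\kappa|_p<6\varepsilon$ already forces $|q_1|\to\infty$, because for fixed $q_1$ the left side is a fixed positive number. So in either case only finitely many $\varepsilon$ can yield any given $q_1$, hence $|q^{(k)}|\to\infty$.

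Combining the two steps, the values $|q|\inn{q\alpha+\delta}|q+\kappa|_p$ along integers with $|q|\to\infty$ tend to $0$. This gives $\liminf_{|q|\to\infty}|q|\inn{q\alpha+\delta}|q+\kappa|_p=0$, as claimed.
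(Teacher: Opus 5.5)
Your proposal is correct and follows essentially the route the paper intends: the corollary is stated without proof as a consequence of the argument for Proposition~\ref{p:dani}(a), where the paper would use Lemma~\ref{l:NA}(b) to pass to $\Omega_{p^2\varepsilon}$, and you instead run that argument directly on the vector. Your explicit check that each fixed $q_1$ occurs for only finitely many $\varepsilon$, treating the two cases separately, is more careful than the paper. The paper's Proposition~\ref{p:dani}(a) asserts strict positivity in its second case even though $\inn{q_1\alpha+\delta}$ could vanish there.
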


\section{Compact diagonal orbits in $X$}
\label{sec: periodic}

Let $X$ denote the space of two-dimensional $\zloverp$-lattices of co-volume one as defined in \eqref{eq:TheSpace}. Compact diagonal orbits in $X$ will play a key role in our argument. In the case of two-dimensional real lattices, the compact diagonal orbits correspond to orders in real quadratic fields. We will give an explicit analogous construction for the space $X$.

Let $K$ be a real quadratic number field and
$\Oo_K$
its ring of integers.
We assume that the prime $p$ splits in $K$, namely, $p\Oo_K = \pP_1\pP_2$ for  ideals $\pP_i$, $i=1,2$.
Set $h$ to be the class number of $K$, so that $\pP_i^h = \tau_i \Oo_K$, $i=1,2$, are principal ideals. 
We choose $\tau_i$, so that they are Galois conjugates of each other. 
Let $\sigma_i^\infty: K \to \RR$, $i=1,2$ (resp. $\sigma_i^p: K \to \QQ_p$, $i=1,2$),  be two distinct embeddings into $\RR$ (resp.\ $\QQ_p$). Note that for $i=1,2$, $\sigma_i^p$ is the embedding of $K$ associated to the completion with respect to the ideal $\pP_i$.

We denote by $\Oo_{K}^\times$ the group of units of 
$\Oo_{K}$ and fix the fundamental unit 
$\epsilon_\infty$  satisfying $\pm\langle \epsilon_\infty \rangle = \Oo_K^\times$ and $\sigma_1^\infty(\epsilon_\infty) > 1$.
We set $S = \{\pP_1, \pP_2\}$ and consider the ring $\Oo_{K,S}$ of $S$-integers (see \cite[Page 90]{MilneANT}) defined by
\begin{equation}
    \Oo_{K,S}:=\{a\in K: \mathrm{ord}_{\pP}(a)\geq 0 \textrm{ for all prime ideals } \pP\notin S\}.
\end{equation}
The group of $S$-units $\Oo_{K,S}^\times$ is further defined as
\begin{equation}
    \Oo_{K,S}^\times:=\{a\in K: \mathrm{ord}_{\pP}(a)= 0 \textrm{ for all prime ideals } \pP\notin S\}.
\end{equation}
 As proved in loc.~cit.~or in \cite[Cor.\ II.7.3]{NeukirchANT}, $\Oo_{K,S}^\times$ has rank $3$ and $\langle \epsilon_\infty, \tau_1, \tau_2 \rangle \subseteq \Oo_{K,S}^\times$ is a rank-$3$ subgroup.  For $t \in \Oo_{K,S}^\times$, we set 
$$\sigma(t) := \Big(\diag\big(\sigma_1^\infty(t), \sigma_2^\infty(t)\big), \diag\big(\sigma_1^p(t), \sigma_2^p(t)\big)\Big).$$

Since $K$ is quadratic, one can choose $\FldGen$ such that $\ZZ[\FldGen] = \Oo_K$. 
We now set
\begin{equation}
    \label{eq:THELattice}
\TheLattice:=  \left( d_K^{-\frac12}\begin{pmatrix} 1 & \sigma_1^\infty(\FldGen) \\ 1 & \sigma_2^\infty(\FldGen) \end{pmatrix},  \begin{pmatrix} 1 & \sigma_1^p(\FldGen) \\ 1 & \sigma_2^p(\FldGen) \end{pmatrix} \right) \GLZZP,
\end{equation}
where $d_K:=\abs{\sigma_2^\infty(\FldGen)-\sigma_1^\infty(\FldGen)}$.
Note that since $p$ splits, it does not divide the discriminant of $\ZZ[\FldGen]$ which by definition is equal to $\left(\sigma_1^p(\FldGen)-\sigma_2^p(\FldGen)\right)^2$. Therefore, 
$\sigma_2^p(\FldGen)-\sigma_1^p(\FldGen)\in \ZZ_p^\times$,
and $\TheLattice$ belongs to the space $X$.
Moreover,
\begin{equation}\label{eq: u def}
   \TheLattice\in X_{u_K}, \textrm{ where } u_K:=\sigma_2^p(\FldGen)-\sigma_1^p(\FldGen)\in\ZZ_p^\times.
\end{equation}

\begin{Lemma}\label{lem: NEW two stabilizers}
    There exist $t_0>0,s_0\in \RR$, $n_0\in \ZZ_{>0}$, and $ a,b\in \ZZ_p^{\times}$ with $a$ having infinite order such that the elements
	\begin{equation}\label{eq:alpha def}
		\theta:= \sigma(\epsilon_\infty^2)=\left(\begin{pmatrix}
			e^{t_0} & 0\\
			0   & e^{-t_0}
		\end{pmatrix},
		\begin{pmatrix}
			a^{-1} & 0\\
			0 & a
		\end{pmatrix}\right),
	\end{equation} 
	\begin{equation}\label{eq:beta def}
		\zeta := \sigma(\tau_2^{-2})=\left(p^{-\ClassNumberNorm}\begin{pmatrix}
			e^{s_0} & 0\\
			0   & e^{-s_0}
		\end{pmatrix},
		\begin{pmatrix}
			b & 0\\
			0 & b^{-1}p^{-2\ClassNumberNorm}
		\end{pmatrix}\right)
	\end{equation}
	belong to $\mathrm{Stab}_A(\TheLattice)$, and the pairs $\{e^{t_0}$, $p^{-\ClassNumberNorm}e^{s_0}\}$, $\{e^{t_0}$, $p^{-\ClassNumberNorm}e^{-s_0}\}$, $\{e^{2s_0},e^{2t_0}\}$ are multiplicatively independent.  
\end{Lemma}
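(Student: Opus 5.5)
The plan is to identify $\TheLattice$ with the $\zloverp$-module $\Oo_K[\tfrac1p]$ embedded in $\RR^2\times\QQ_p^2$. Multiplication by $S$-units preserves this module, so the elements $\sigma(t)$ stabilise $\TheLattice$. It then remains to compute the shape of $\sigma(\epsilon_\infty^2)$ and $\sigma(\tau_2^{-2})$, and to get multiplicative independence from the rank-$3$ statement for $\langle\epsilon_\infty,\tau_1,\tau_2\rangle$.

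\emph{Stabilisation.} Write $g=(g_\infty,g_p)$ for the representative in \eqref{eq:THELattice}. Its rows are indexed by the embeddings and its columns by the basis $1,\FldGen$. Since $p\Oo_K=\pP_1\pP_2$, inverting $p$ inverts exactly the primes in $S$, so $\Oo_K[\tfrac1p]=\Oo_{K,S}$. Let $t\in\Oo_{K,S}^\times$ and let $M_t$ be the matrix of multiplication by $t$ in the basis $1,\FldGen$.
\begin{itemize}
\item The entries of $M_t$ lie in $\zloverp$, and so do those of $M_t^{-1}=M_{t^{-1}}$.
\item $\det M_t=N_{K/\QQ}(t)\in\{\pm p^k\}$, so $M_t\in\GLZP$.
\item Comparing columns gives $\sigma(t)g=gM_t$; the scalar $d_K^{-1/2}$ commutes and causes no problem.
\end{itemize}
Hence $\sigma(t)\TheLattice=\TheLattice$. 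Note also that $\sigma(t)\in G$: the product formula gives $|N(t)|\cdot|N(t)|_p=1$.

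\emph{Shape of $\theta$.} Take $t=\epsilon_\infty^2$. It has norm $1$ and both real embeddings are positive. Put $e^{t_0}:=\sigma_1^\infty(\epsilon_\infty^2)>1$, so $t_0>0$ and $\sigma_2^\infty(\epsilon_\infty^2)=e^{-t_0}$.
\begin{itemize}
\item $p$-adically, $\sigma_1^p(t)\sigma_2^p(t)=N(t)=1$ and both factors are units. Setting $a:=\sigma_2^p(\epsilon_\infty^2)$ gives the form \eqref{eq:alpha def}.
\item $a$ has infinite order because $\sigma_2^p$ is injective and $\epsilon_\infty^2$ is not a root of unity.
\end{itemize}

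\emph{Shape of $\zeta$.} Take $t=\tau_2^{-2}$ and set $n_0:=h>0$. Since $\tau_2\Oo_K=\pP_2^h$ is prime to $\pP_1$, $\sigma_1^p(\tau_2^{-2})=:b\in\ZZ_p^\times$. Also $N(\tau_2)=\pm p^h$, so $\sigma_2^p(\tau_2^{-2})=N(\tau_2)^{-2}/b=p^{-2h}b^{-1}$.
\begin{itemize}
\item In the real place, both embeddings of the square $\tau_2^{-2}$ are positive with product $p^{-2h}$.
\item So they can be written as $p^{-h}e^{\pm s_0}$, which is \eqref{eq:beta def}. With $n=h\in\NN$, both $\theta$ and $\zeta$ lie in $A$.
\end{itemize}
The one thing to watch here is that $\sigma_i^p$ is attached to the completion at $\pP_i$, so that $\tau_2$ is a unit at $\sigma_1^p$. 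I expect this bookkeeping of which embedding goes with which prime to be the only delicate point.

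\emph{Independence.} Suppose $e^{kt_0}(p^{-h}e^{s_0})^m=1$. Then $\sigma_1^\infty(\epsilon_\infty^{2k}\tau_2^{-2m})=1$. By injectivity of $\sigma_1^\infty$ this gives $\epsilon_\infty^{2k}\tau_2^{-2m}=1$, which forces $k=m=0$ since $\langle\epsilon_\infty,\tau_1,\tau_2\rangle$ has rank $3$. The pair $\{e^{t_0},p^{-h}e^{-s_0}\}$ is handled the same way using $\sigma_2^\infty$. For the last pair, $\tau_1$ and $\tau_2$ are Galois conjugates, so $\sigma_2^\infty(\tau_2)=\sigma_1^\infty(\tau_1)$. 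Therefore
\[
e^{2s_0}=\sigma_1^\infty(\tau_2^{-2})/\sigma_2^\infty(\tau_2^{-2})=\sigma_1^\infty\big((\tau_1/\tau_2)^2\big),\qquad e^{2t_0}=\sigma_1^\infty(\epsilon_\infty^4).
\]
A relation between them would give $(\tau_1/\tau_2)^{2k}\epsilon_\infty^{4m}=1$, which again contradicts rank $3$ unless $k=m=0$.
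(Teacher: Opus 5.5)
Your proposal is correct and takes essentially the same route as the paper. Stabilisation comes from the multiplication-by-$t$ (change-of-basis) matrix, which lies in $\GL_2(\zloverp)$ with determinant $N_{K/\QQ}(t)=\pm p^{k}$. The shapes of $\theta$ and $\zeta$ come from the identity $\sigma_1^\infty\sigma_2^\infty=N_{K/\QQ}=\sigma_1^p\sigma_2^p$ together with matching each embedding $\sigma_i^p$ to its prime $\pP_i$. Multiplicative independence comes from pulling back through an injective embedding to the rank-$3$ group $\langle\epsilon_\infty,\tau_1,\tau_2\rangle$, using $\sigma_2^\infty(\tau_2)=\sigma_1^\infty(\tau_1)$ for the last pair.
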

\begin{proof}
We begin by recalling that (see \cite[Ch.\ II, (8.4)]{NeukirchANT}) for any $y \in K$,
\begin{equation}\label{eq:NormInComp}
\sigma_1^\infty(y)\sigma_2^\infty(y) = \operatorname{Nr}_{K/\QQ}(y) = \sigma_1^p(y)\sigma_2^p(y).    
\end{equation}
 Since $\operatorname{Nr}_{K/\QQ}(\epsilon_\infty^2) = 1$, we conclude that $\theta=\sigma(\epsilon_\infty^2)\in\SL_2(\RR)\times \SL_2(\QQ_p)$.  We define $t_0$ via $e^{t_0} = \sigma_1^\infty(\epsilon_\infty^2)$. Then $t_0 > 0$ since $\sigma_1^\infty(\epsilon_\infty) > 1$. 
 We set $a := \sigma_2^p(\epsilon_\infty^2)$.
 Since $\epsilon_\infty^2$ has infinite order, $a$
 also has infinite order.
It follows from \eqref{eq:NormInComp} that
$\sigma_1^p(\epsilon_\infty^2)=a^{-1}$.
 Since $\epsilon_\infty\in \Oo_K^\times$, we have $\epsilon_\infty\notin \pP_i$, so that
 $a\in \ZZ_p^\times$. 
This verifies that $\theta$ is of the form \eqref{eq:alpha def}.

To prove that $\zeta$ has the form in  \eqref{eq:beta def}, we recall that $\tau_i \Oo_K  = \pP_i^h$ where $p\Oo_K = \pP_1\pP_2$. As $\tau_1$ and $\tau_2$ are Galois conjugates of each other, they have the same norm. Therefore, for $i = 1,2$, we have $\operatorname{Nr}_{K/\QQ}(\tau_i) = \pm p^h$ and $\operatorname{Nr}_{K/\QQ}(\tau_i^{2}) = p^{2h}$.
Since $\sigma_i^p$ 
sends $K$ to the completion with respect to $\pP_i$, $\sigma_1^p(\tau_1^{2}) = p^{2h} b_1$ for some $b_1 \in \ZZ_p^\times$. Then it follows from \eqref{eq:NormInComp} that $\sigma_2^p(\tau_1^{2}) = b_1^{-1}$.
Similarly, $\sigma_2^p(\tau_2^{2}) = p^{2h} b_2$ for $b_2 \in \ZZ_p^\times$, and $\sigma_1^p(\tau_2^{2}) = b_2^{-1}$.
We now set $b = b_2$, and $\ClassNumberNorm = h$.  Since
\begin{equation}\label{eq:PNormBeta}
\sigma_1^\infty(\tau_2^{-2}) \, \sigma_2^\infty(\tau_2^{-2}) = \operatorname{Nr}_{K/\QQ}(\tau_2^{-2}) = p^{-2h},    
\end{equation}
we define $s_0$ to be the real number satisfying
\begin{equation}
\label{eq:s0def}  
p^{\ClassNumberNorm} 
\diag\big(\sigma_1^\infty(\tau_2^{-2}),\sigma_2^\infty(\tau_2^{-2})\big)=\diag\big(
e^{s_0},e^{-s_0}\big).
\end{equation}
This is possible since $\sigma_1^\infty(\tau_2^{-2})=\sigma_1^\infty(\tau_2^{-1})^2>0$. This shows that $\zeta$ has the form  \eqref{eq:beta def}.

We now turn to the multiplicative independence. Taking images under a field embedding preserves multiplicative independence. The pairs $\{e^{t_0}$, $p^{-\ClassNumberNorm}e^{s_0}\}$, $\{e^{t_0}$, $p^{-\ClassNumberNorm}e^{-s_0}\}$, and $\{e^{2s_0},e^{2t_0}\}$
are respectively the images of $\set{\epsilon_\infty^2,\tau_2^{-2}}$ under $\sigma_1^\infty$, $\set{\epsilon_\infty^{-2},\tau_2^{-2}}$ under $\sigma_2^\infty$, and $\set{\tau_1^2\tau_2^{-2},\epsilon_\infty^4}$ under $\sigma_1^\infty$. 
To check the last case, we use that 
since $\tau_1$ and $\tau_2$ are Galois conjugate,
$\sigma_1^\infty(\tau_2)=\sigma_2^\infty(\tau_1)$.
Hence, the multiplicative independence follows
from the independence of $\epsilon_\infty,\tau_1,\tau_2$.

Finally, we show that any element of the form $\sigma(t)$, $t \in \Oo_{K,S}^\times$, stabilises $\TheLattice$. As in the real case, this follows from the fact that multiplication by $\Oo_{K,S}^\times$ preserves $\Oo_{K,S}$. 
We carry out the computation below for completeness.
For $t \in \Oo_{K,S}^\times$,
\begin{align}\label{eq:1}
\sigma(t)\TheLattice 
&=
\left(d_K^{-\frac12}
\left(\begin{smallmatrix}
\sigma_1^\infty(t) & \sigma_1^\infty(t\FldGen) \\
\sigma_2^\infty(t) & \sigma_2^\infty(t\FldGen)
\end{smallmatrix}\right),
\left(\begin{smallmatrix}
\sigma_1^p(t) & \sigma_1^p(t\FldGen) \\
\sigma_2^p(t) & \sigma_2^p(t\FldGen)
\end{smallmatrix}\right)
\right)
\GLZZP .
\end{align}
Since $t \in \Oo_{K,S}^\times$ and $\{1, \FldGen\}$ is a basis of $\Oo_{K,S}=\Oo_{K}\otimes\zloverp$ as a $\ZZ[\tfrac{1}{p}]$-module, so is $\{t, t \FldGen\}$. Therefore, there exists a change of $\zloverp$-basis matrix 
\begin{equation}
    \label{eq:ChangeOfBasisMat}
\gamma =\gamma(t):=
\begin{pmatrix}
a & b \\
c & d
\end{pmatrix}
\in \mathrm{M}_2(\ZZ[\tfrac{1}{p}])
\quad \text{with} \quad
t = a + c \FldGen, \quad
t \FldGen = b + d \FldGen,
\end{equation}
such that $\det(\gamma)=\pm p^\ell$ for some $\ell\in \ZZ$. Then $\det(\gamma,\gamma)=p^{\ell}p^{-\ell} = 1$,
so that $(\gamma,\gamma)\in\Gamma$. 
It follows that the right-hand side expression in \eqref{eq:1} is equal to
\[
\sigma(t)\TheLattice =\left(d_K^{-\frac{1}{2}}
\begin{pmatrix}
1 & \sigma_1^\infty(\FldGen) \\
1 & \sigma_2^\infty(\FldGen)
\end{pmatrix},
\begin{pmatrix}
1 & \sigma_1^p(\FldGen) \\
1 & \sigma_2^p(\FldGen)
\end{pmatrix}
\right)
(\gamma, \gamma) \GLZZP  = \TheLattice, 
\]
as required.
\end{proof}
\begin{remark}\label{rem:EVofChangeBasisMat}
    For later use, we note that $\gamma=\gamma(t)$ defined above in \eqref{eq:ChangeOfBasisMat} and $\diag\big(\sigma_1^\infty(t),\sigma_2^\infty(t)\big)$ are conjugates over $\RR$.
\end{remark}

Throughout the paper, we fix the $\zloverp$-lattice corresponding to $\TheLattice$ defined in \eqref{eq:THELattice}  by $\Lambda_p:=\Lambda_{\TheLattice}$, set
\begin{equation}\label{eq:Ttpdef}
    \Tt_p := (\RR^2\times\QQ_p^2)/\Lambda_p.
\end{equation}
and denote by  $\pi: \RR^2\times\QQ_p^2\rightarrow \Tt_p$,
the factor map.

Let 
\begin{align}
 \e_1&=\left((1,0)^t,(0,0)^t\right),\quad
 \e_2=\left((0,1)^t,(0,0)^t\right),\label{eq:vector}\\
 \e_3&=\left((0,0)^t,(1,0)^t\right),\quad
 \e_4=\left((0,0)^t,(0,1)^t\right).\nonumber   
\end{align}
denote the standard basis of $\RR^2\times\QQ_p^2$.
The lattice $\Lambda_p$ is explicitly given as the free two-dimensional $\mathbb{Z}[\tfrac1p]$-module generated by the vectors
\begin{align} \label{eq:x12def}
\mathbf{x}_1&:=d_K^{-\frac{1}{2}}\e_1+d_K^{-\frac{1}{2}}\e_2+
\e_3+\e_4,\\
\mathbf{x}_2&:=d_K^{-\frac{1}{2}}\sigma_1^\infty(\FldGen)\e_1+d_K^{-\frac{1}{2}}\sigma_2^\infty(\FldGen)\e_2+
\sigma_1^p(\FldGen)\e_3+\sigma_2^p(\FldGen)\e_4.\nonumber
\end{align}
The space $\Tt_p$ is isomorphic to the standard solenoid $\Omega_p^2:=(\RR^2\times\QQ_p^2)/\zloverp^2$, where the required isomorphism corresponds to the change of basis from the standard basis to the basis $\mathbf{x}_1$ and $\mathbf{x}_2$. The standard fundamental domain $[0,1)^2\times\ZZ_p^2$ for $\Omega_p^2$ transforms into a fundamental domain
\begin{equation}\label{eq:d0def}
    \OurFunDo:=\left( d_K^{-\frac12}\begin{pmatrix} 1 & \sigma_1^\infty(\FldGen) \\ 1 & \sigma_2^\infty(\FldGen) \end{pmatrix},  \begin{pmatrix} 1 & \sigma_1^p(\FldGen) \\ 1 & \sigma_2^p(\FldGen) \end{pmatrix} \right)\left([0,1)^2\times \ZZ_p^2\right).
\end{equation}
 As noted in \eqref{eq: u def}, the determinant of the matrix $ \begin{pmatrix} 1 & \sigma_1^p(\FldGen) \\ 1 & \sigma_2^p(\FldGen) \end{pmatrix}$ is in $\ZZ_p^\times$, making it invertible over $\ZZ_p$, so that
 \begin{equation}
    \label{eq:d0property}
\OurFunDo= \pa{d_K^{-\frac12}\begin{pmatrix} 1 & \sigma_1^\infty(\FldGen) \\ 1 & \sigma_2^\infty(\FldGen) \end{pmatrix}[0,1)^2}\times \ZZ_p^2\subset \RR^2\times\ZZ_p^2.\end{equation}

We also introduce the space
\begin{equation}\label{eq:Tpdef}
    \Tpbar:=\Tt_p/V_p,\quad\hbox{where}\;\;
    V_p:=\pi(\ZZ_p \e_3)\subset \Tt_p.    
\end{equation} 
Then 
\begin{equation}    \label{eq:d0property1}
\barOurFunDo:= \pa{d_K^{-\frac12}\begin{pmatrix} 1 & \sigma_1^\infty(\FldGen) \\ 1 & \sigma_2^\infty(\FldGen) \end{pmatrix}[0,1)^2}\times \begin{pmatrix} 0\\ \ZZ_p\end{pmatrix}\subset \RR^2\times\ZZ_p^2\end{equation}
is a fundamental domain for $\Tpbar$.
We denote by 
$$
\pibar:\RR^2\times \QQ_p^2\rightarrow \Tpbar\quad\hbox{and}\quad \piti:\Tt_p\rightarrow \Tpbar 
$$
the corresponding quotient maps. 

Before moving on, we check a property that the closures of certain one-dimensional lines in $\Tt_p$ are dense. 

\begin{lemma}\label{lem:Irrationality}
 Each of the sets
\begin{equation}\label{eq:realLines}
V_i:=\{\pi(s\e_i):s\in \RR_{>0}\}, \quad i = 1, 2,    \end{equation}
\begin{equation}\label{eq:padicLines}
    V_i:=\{\pi(s\e_i):s\in \QQ_p\}, \quad i = 3, 4,
\end{equation}
 is dense in $\Tt_p$.
\end{lemma}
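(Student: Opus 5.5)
The plan is to use Pontryagin duality on the compact abelian group $\Tt_p$. First we reduce each statement to a statement about full one-parameter subgroups. For $i=3,4$ the set $V_i=\pi(\QQ_p\e_i)$ is already a subgroup. For $i=1,2$, the closure of $\{\pi(s\e_i):s>0\}$ is a closed subsemigroup of the compact group $\Tt_p$. A closed subsemigroup of a compact group is a subgroup: for any $x$ in it, the closure of $\{x^n\}_{n\ge1}$ is a compact monothetic semigroup and hence contains $x^{-1}$. So this closure contains $\pi(\RR\e_i)$, and it suffices to show that the closure $H$ of $\pi(F\e_i)$ is all of $\Tt_p$, with $F=\RR$ or $\QQ_p$. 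Since $\Tt_p$ is compact, $H=\Tt_p$ holds if and only if every character of $\Tt_p$ that is trivial on $\pi(F\e_i)$ is trivial.

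Next we describe the characters explicitly. Write $g=(g_\infty,g_p)$ for the pair of matrices in \eqref{eq:THELattice}, so that $\Lambda_p=g\,\zloverp^2$ and $\Tt_p\cong\Omega_p^2$ via $y\mapsto g^{-1}y$. The characters of the solenoid $\Omega_p^2=(\RR^2\times\QQ_p^2)/\zloverp^2$ are
\[
\chi_w(x_\infty,x_p)=\exp\bigl(2\pi i(w\cdot x_\infty-\{w\cdot x_p\}_p)\bigr),\qquad w\in\zloverp^2,
\]
where $\{\cdot\}_p$ is the $p$-adic fractional part; this is the standard self-duality of $\RR\times\QQ_p$ modulo $\zloverp$. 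Hence every character of $\Tt_p$ has the form $y\mapsto\chi_w(g^{-1}y)$.

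Now suppose $\chi_w(g^{-1}s\e_i)=1$ for all $s\in F$. For $i=1,2$ the $p$-adic component of $g^{-1}s\e_i$ vanishes, so the condition reads $e^{2\pi i s\,w\cdot g_\infty^{-1}e_i}=1$ for all $s\in\RR$, i.e.\ $w\cdot c_i=0$, where $c_i$ is the $i$-th column of $g_\infty^{-1}$. Up to scalars these columns are $(\sigma_2^\infty(\FldGen),-1)^t$ and $(-\sigma_1^\infty(\FldGen),1)^t$. Thus $w_1\sigma_2^\infty(\FldGen)=w_2$, respectively $w_1\sigma_1^\infty(\FldGen)=w_2$, with $w_1,w_2\in\QQ$. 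Since $\sigma_j^\infty(\FldGen)$ is irrational ($\FldGen$ generates the quadratic field $K$), this forces $w=0$.

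For $i=3,4$ the real component vanishes, and the condition is that $s\mapsto e^{-2\pi i\{s\,w\cdot c'_i\}_p}$ is trivial on $\QQ_p$, where $c'_i$ is the corresponding column of $g_p^{-1}$. Since $\{\cdot\}_p$ is nonzero on some element of $\QQ_p$, this requires $w\cdot c'_i=0$. Up to a unit scalar, $c'_i$ is $(\sigma_2^p(\FldGen),-1)^t$ or $(-\sigma_1^p(\FldGen),1)^t$. The elements $\sigma_j^p(\FldGen)\in\QQ_p$ are roots of the irreducible minimal polynomial of $\FldGen$ over $\QQ$, so they are not in $\QQ$. Hence again $w=0$, and density follows.

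The main point requiring care is the duality setup: the correct description of the dual of $\Omega_p^2$ as $\zloverp^2$, and the reduction from the half-lines in \eqref{eq:realLines} to full lines. The arithmetic step itself is just the irrationality of the embeddings of $\FldGen$.
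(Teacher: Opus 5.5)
Your proof is correct and follows essentially the same route as the paper: the character criterion for density in the compact group $\Tt_p$, the $\zloverp^2$-indexed characters of the solenoid pulled back through the change of basis, identifying the coordinate lines with lines spanned by columns of the inverse matrices, the irrationality of $\sigma_j^\infty(\FldGen)$ and $\sigma_j^p(\FldGen)$, and the fact that a closed subsemigroup of a compact group is a group (used to pass from rays to full lines). The differences are cosmetic: you use the opposite sign convention for the characters, and you cite the monothetic-semigroup fact where the paper gives a short direct argument.
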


\begin{proof}
It follows from \eqref{eq:x12def} that
\[
 \Lambda_p=\set{(M_\infty q,M_pq):q\in\zloverp^2},
\]
where 
\[
M_\infty:=d_K^{-1/2}
\begin{pmatrix}
1&\sigma_1^\infty(\FldGen)\\
1&\sigma_2^\infty(\FldGen)
\end{pmatrix}
\quad\hbox{and}\quad
M_p:=
\begin{pmatrix}
1&\sigma_1^p(\FldGen)\\
1&\sigma_2^p(\FldGen)
\end{pmatrix}.
\]
\begin{samepage}
We have an isomorphism
\begin{equation}\label{eq:standard-solenoid-isomorphism}
 \Phi:\Omega_p^2=(\RR^2\times\QQ_p^2)/\zloverp^2\longrightarrow\Tt_p:
 \;
 [(x_\infty,x_p)]\longmapsto
 [(M_\infty x_\infty,M_px_p)].
\end{equation}
\end{samepage}

We use the following standard character criterion: if \(H\) is a subgroup
of a compact abelian group \(G\), then
\begin{equation}\label{eq:character-density-criterion}
 \overline H=G
 \quad\Longleftrightarrow\quad
 \set{\chi\in\widehat G:\restr{\chi}{H}=1}=\set{1}.
\end{equation}
Indeed, if \(\overline H\neq G\), the completeness of the characters of a
compact abelian group \cite[p.~621]{halmos}, applied to the nontrivial
quotient \(G/\overline H\), gives a nontrivial character of \(G\) which is
trivial on \(H\).

The standard description of the \(p\)-adic solenoid gives
\(\widehat{\Omega_p^2}\simeq\zloverp^2\); see
\cite[Ch.~II, \S1, pp.~509--511]{BerendAdelic}.  Recall that the
\(p\)-adic fractional part \(\{y\}_p\) is the unique element of
\(\zloverp\cap[0,1)\) such that \(y-\{y\}_p\in\ZZ_p\).  With the diagonal
convention used here, the character corresponding to
\(q\in\zloverp^2\) is
\begin{equation}\label{eq:standard-solenoid-character}
 \chi_q([(x_\infty,x_p)])
 =\exp\left(2\pi i\left(-\inner{q}{x_\infty}
                  +\{\inner{q}{x_p}\}_p\right)\right).
\end{equation}
This is well defined because \(\{r\}_p-r\in\ZZ\) for every
\(r\in\zloverp\).

For \(w\in\RR^2\), the restriction of \(\chi_q\) to the line generated by
\((w,0)\) is
\[
 \chi_q([(tw,0)])=\exp\left(-2\pi i t\inner{q}{w}\right),
 \qquad t\in\RR,
\]
and is therefore trivial precisely when \(\inner{q}{w}=0\).  Likewise, for
\(w\in\QQ_p^2\),
\[
 \chi_q([(0,sw)])=\exp\left(2\pi i\{s\inner{q}{w}\}_p\right),
 \qquad s\in\QQ_p.
\]
This restriction is trivial precisely when \(\inner{q}{w}=0\).  Indeed, if
\(a:=\inner{q}{w}\neq0\), then for \(s=p^{-1}a^{-1}\) its value is
\(e^{2\pi i/p}\neq1\).  Consequently, by
\eqref{eq:character-density-criterion}, the image in \(\Omega_p^2\) of a
line generated by \((w,0)\), or by \((0,w)\), is dense if and only if
\begin{equation}\label{eq:no-rational-orthogonal-vector}
 \inner{q}{w}\neq0
 \qquad\text{for every }q\in\zloverp^2\setminus\set{0}.
\end{equation}

Let \(f_1:=(1,0)^t\) and \(f_2:=(0,1)^t\).  A direct calculation using
\eqref{eq:standard-solenoid-isomorphism} shows that, for \(j=1,2\),
\begin{align}
 \Phi^{-1}\big(\set{\pi(t\e_j):t\in\RR}\big)
 &=\set{[(tM_\infty^{-1}f_j,0)]:t\in\RR}
 =\set{[(tw_j,0)]:t\in\RR},
 \label{eq:real-coordinate-preimage}\\
 \Phi^{-1}\big(\set{\pi(s\e_{j+2}):s\in\QQ_p}\big)
 &=\set{[(0,sM_p^{-1}f_j)]:s\in\QQ_p}
 =\set{[(0,sv_j)]:s\in\QQ_p},
 \label{eq:padic-coordinate-preimage}
\end{align}
where \(w_1:=(\sigma_2^\infty(\FldGen),-1)^t\),
\(w_2:=(-\sigma_1^\infty(\FldGen),1)^t\),
\(v_1:=(\sigma_2^p(\FldGen),-1)^t\), and
\(v_2:=(-\sigma_1^p(\FldGen),1)^t\).
Since \(\FldGen\notin\QQ\) and every embedding of \(K\) fixes \(\QQ\),
each of the numbers \(\sigma_j^\infty(\FldGen)\) and
\(\sigma_j^p(\FldGen)\) lies outside \(\QQ\).  Hence none of the four
vectors above is orthogonal to a nonzero vector in \(\zloverp^2\).  For
example, an equality
\[
 q_1\sigma_2^\infty(\FldGen)-q_2=0,
 \qquad q_1,q_2\in\zloverp,
\]
would imply either \(q_1=q_2=0\) or
\(\sigma_2^\infty(\FldGen)=q_2/q_1\in\QQ\).  The other three cases are
identical.  Equations \eqref{eq:real-coordinate-preimage} and
\eqref{eq:padic-coordinate-preimage}, together with
\eqref{eq:no-rational-orthogonal-vector}, therefore show that the two full
real lines and the two \(p\)-adic lines have dense image in \(\Tt_p\).

It remains to replace the full real lines by the positive rays in
\eqref{eq:realLines}.  The closure of such a ray is a closed subsemigroup of
the compact group \(\Tt_p\) and contains the identity.  Every such
subsemigroup is a group: if \(x\) belongs to it, choose a strictly increasing
sequence \(n_k\) such that \(n_kx\) converges.  Then
\((n_{k+1}-n_k)x\to0\), and hence
\((n_{k+1}-n_k-1)x\to-x\).  Thus the closure of the positive ray also
contains the negative ray, and consequently contains the full real line.
It is therefore equal to \(\Tt_p\), as required.
\end{proof}

\section{Minimal sets in $p$-Adic Tori}\label{sec: dense orbits}

In this section we work with the spaces 
$\Tt_p$ and $\Tpbar$ defined in \eqref{eq:Ttpdef} and
\eqref{eq:Tpdef} and the transformations $\theta$, $\zeta$ constructed in Lemma \ref{lem: NEW two stabilizers}.
Since the lattice $\Lambda_p$ is stabilised by $\theta$ and $\zeta$, these transformations act on $\Tt_p$.
Furthermore, since $V_p$ is invariant under $\SemiGroup$ action,
their action also descends to $\Tpbar$.
We study orbits of the semigroup $\SemiGroup$
acting on $\Tpbar$.
A crucial ingredient in our argument is 
a description of minimal subsets for this action.
A general result in this direction is due to Berend \cite{BerendAdelic}. However, our case is not directly covered by \cite{BerendAdelic}. We will therefore prove an analogue of \cite{BerendAdelic} in our setting, building on Berend's work:

\begin{theorem}\label{thm: densityNEW}
Any minimal closed $\SemiGroup$-invariant subset of $\Tpbar$ is finite.
\end{theorem}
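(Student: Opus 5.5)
Since $\theta$ and $\zeta$ stabilise $\Lambda_p$ (Lemma \ref{lem: NEW two stabilizers}) and preserve $V_p$ ($\zeta\e_3=b\e_3$, $\theta\e_3=a^{-1}\e_3$), they act on $\Tpbar$ by commuting continuous automorphisms. The image of $0$ is a fixed point, so $\Tpbar$ itself is not minimal. It therefore suffices to prove a Berend-type dichotomy: \emph{every infinite closed $\SemiGroup$-invariant subset $M\subseteq\Tpbar$ equals $\Tpbar$.} A minimal set is then either finite or all of $\Tpbar$, and the second option is excluded.

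\textbf{Setting up.} Let $M$ be an infinite minimal closed invariant set. It has no isolated points: an isolated point would have a relatively open orbit, and minimality would make $M$ a finite union of such orbits. So there are $x,y_k\in M$ with $y_k\to x$ and $y_k\neq x$. I lift the differences to the fundamental domain $\barOurFunDo$, writing
$$w_k=\alpha_k\e_1+\beta_k\e_2+\gamma_k\e_4,\qquad w_k\to0,$$
with $\alpha_k,\beta_k\in\RR$ and $\gamma_k\in p\ZZ_p$ eventually. On these coordinates the elements act diagonally:
\begin{itemize}
\item $\theta^m\zeta^n$ multiplies $\alpha$ by $e^{mt_0}(p^{-\ClassNumberNorm}e^{s_0})^n$;
\item it multiplies $\beta$ by $e^{-mt_0}(p^{-\ClassNumberNorm}e^{-s_0})^n$;
\item it multiplies $\gamma$ by a $p$-adic unit times $p^{-2\ClassNumberNorm n}$.
\end{itemize}
Quotienting by $V_p$ is exactly what removes the $\e_3$-direction, where the action is isometric and gives no expansion.

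\textbf{Key step: blowing up small differences.} I split according to which coordinate of $w_k$ dominates, and in each case choose $(m,n)$, $n\ge0$, so that one coordinate of $\theta^m\zeta^n w_k$ has prescribed size while the others tend to $0$. Passing to limits, $M$ then contains a translate $z+\{\pi(s\e_i):s\in S\}$ for some $i\in\{1,2,4\}$. Here $S$ is either a set of positive reals that is $\epsilon$-dense in an interval, or a $p$-adic sphere times a dense set of units. The tool is the standard Berend/Furstenberg lemma: if $\lambda,\mu$ are multiplicatively independent, the set $\{\lambda^m\mu^n\}$ with $n\ge0$ and $m\in\ZZ$ in a suitable cone is logarithmically dense. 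The independence of the pairs $\{e^{t_0},p^{-\ClassNumberNorm}e^{\pm s_0}\}$ and $\{e^{2s_0},e^{2t_0}\}$ from Lemma \ref{lem: NEW two stabilizers} is what lets me tune the expansion in one real coordinate while contracting the other.
\begin{itemize}
\item \emph{Real coordinates.} Compare $|\alpha_k|$ and $|\beta_k|$ and expand the larger one.
\item \emph{$p$-adic coordinate $\gamma_k$.} Use $n$ large to expand it. Simultaneously pick $m\approx$ a prescribed multiple of $n$ so that both real coordinates are controlled. The ratio $s_0/t_0$ is irrational by the independence of $\{e^{2s_0},e^{2t_0}\}$, so the real factors can be kept bounded or small. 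The unit $a^{-m}$ with $a$ of infinite order then makes the $p$-adic parameter sweep a dense subset of a sphere.
\end{itemize}

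\textbf{Concluding, and the main obstacle.} Having found a long segment $z+\pi(S\e_i)\subseteq M$, I again apply the semigroup, now to the segment. This stretches it, with the other coordinates unchanged or contracted. Since $M$ is closed, it then contains $z'+\overline{V_i}$, after projecting to $\Tpbar$, where $V_i$ is the ray or line of Lemma \ref{lem:Irrationality}. By that lemma $\overline{V_i}=\Tt_p$, hence $M=\Tpbar$, contradicting minimality as explained at the start.

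I expect the main obstacle to be the blow-up step in the mixed regime, where the $p$-adic coordinate $\gamma_k$ is comparable to or dominates the real ones. Expansion there is only available through $\zeta^n$ with $n\ge0$, and $\zeta$ simultaneously acts on both real coordinates with factors $p^{-\ClassNumberNorm}e^{\pm s_0}$, both of which can exceed $1$. My first attempt would be to compensate with $\theta^{m}$, $m\in\ZZ$ free, keeping $(m,n)$ near the line where both real factors stay $\le1$. If one real factor is forced to grow, I would instead run a diagonal argument in which two coordinates are expanded together and the resulting two-dimensional piece is shown to contain a dense line, again via Lemma \ref{lem:Irrationality}.
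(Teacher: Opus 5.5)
There is a genuine gap at the step ``Passing to limits, $M$ then contains a translate $z+\{\pi(s\e_i):s\in S\}$''. The only information you have is that for $g=\theta^m\zeta^n$ the point $gy_k=gx+gw_k$ lies in $M$, and the base point $gx$ moves with $g$. The different pairs $(m,n)$ you use to give $[gw_k]_i$ different prescribed sizes produce unrelated base points $gx$. So each choice contributes a single limit point $z_g+\pi(s_g\e_i)$, each with its own $z_g$, not a family of translates of one common $z$. To anchor them all at one point you would need $gx=x$ for all $g$ in a finite-index subsemigroup, i.e.\ $x$ would have to have finite orbit. An infinite minimal set contains no such point, since its finite orbit would be a proper closed invariant subset. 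This is exactly the obstruction that makes Furstenberg's $\times 2,\times 3$ theorem nontrivial. Your final stretching step presupposes an anchored segment, so it is never reached. The statement you reduce to (every infinite closed invariant set is $\Tpbar$) is in fact true here. But the natural route to it finds a periodic accumulation point and translates by it, and that already presupposes that minimal sets are finite, so it is circular.

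Your blow-up mechanism does work if you apply it to $M-M$ instead of $M$. Since $\theta,\zeta$ act by endomorphisms, $M-M$ is closed and invariant, and its base point $0$ is fixed, so the anchoring problem disappears. Since $\pibar(w_k)\in M-M$, you get $M-M=\Tpbar$, which is the paper's Lemma \ref{lem: density of difference}. However, $M-M=\Tpbar$ is not a contradiction by itself. The paper supplies the missing ingredient in Lemma \ref{ref:BerendMagic}:
\begin{itemize}
\item It checks that $(\Tt_p,\SemiGroup)$, and hence $(\Tpbar,\SemiGroup)$, is an $\mathcal F$-flow, using ergodicity of $\theta$ and the fact that all determinants are $q$-adic units for some prime $q\neq p$.
\item It then invokes Berend's theorem that minimal sets of $\mathcal F$-flows are restricted. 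This turns $M+(-M)=\Tpbar$ into $-M=\Tpbar$, which contradicts minimality because $\pibar(0)$ is a fixed point.
\end{itemize}
Your proposal has no substitute for this step.

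Separately, the ``mixed regime'' you flag is not a real obstacle. Whenever a real coordinate of $w_k$ is nonzero along a subsequence, use $\theta^{\pm1}$ alone. It acts on $\e_4$ by the unit $a^{\mp1}$, so the $\e_4$-coordinate still tends to $0$. You only need $\zeta$ when both real coordinates vanish, and then nothing needs to be compensated.
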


The first step of the proof is:
\begin{lemma}\label{ref:BerendMagic}
If $B$ is a minimal closed $\SemiGroup$-invariant subset of $\Tpbar$, then $$B-B\neq \Tpbar.$$
\end{lemma}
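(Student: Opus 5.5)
Write $S:=\SemiGroup$. The plan is a Berend-style argument. I would show that if $B-B=\Tpbar$, then $B$ contains points whose $S$-orbits are forced to fill out a dense set, contradicting minimality. The key inputs are the expanding/contracting structure of $\theta$ and $\zeta$ on $\Tpbar$ and the density of the coordinate lines from Lemma~\ref{lem:Irrationality}.

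\textbf{Step 1: structure of $\Tpbar$.} Since $V_p=\pi(\ZZ_p\e_3)$ is quotiented out, $\Tpbar$ locally looks like $\RR^2\times\QQ_p\e_4$, with the $\e_3$-direction reduced to a compact fiber. Here $\theta$ acts by $e^{t_0},e^{-t_0}$ on the real directions and by $a$ (an isometry) on the $\e_4$-direction. The element $\zeta$ acts by $p^{-n_0}e^{\pm s_0}$ on the real directions and by $b^{-1}p^{-2n_0}$ on $\e_4$, which is contracting $p$-adically. Using the multiplicative independence in Lemma~\ref{lem: NEW two stabilizers}, I will pick elements $\theta^{k}\zeta^{m}$ of $S$ that expand along one chosen real eigendirection and contract or bound the others. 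The $\e_4$-direction is always contracted or isometric under $S$, since $m\ge 0$.

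\textbf{Step 2: suppose $B-B=\Tpbar$.} Then for each small $w$ in the local coordinate direction $\e_1$ there are $x,y\in B$ with $\pibar$-difference $w$. By compactness, choose pairs $x_j,y_j\in B$ with $x_j-y_j=w_j\to 0$ and $w_j$ lying in (or arbitrarily close to) the $\e_1$-line. Apply an element $g_j\in S$ that expands the $\e_1$-direction and does not expand the others, chosen so that $g_jw_j$ has a prescribed size $r$. Passing to limits gives $x',y'\in B$ with $x'-y'=\pibar(r\e_1)$.

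\textbf{Step 3: bootstrapping.} Repeating with varying $r$, and using that the diagonal $S$-action on $B\times B$ preserves $\{(x,y):x-y\in\pibar(\RR\e_1)\}$, I would show that $B$ contains $z+\pibar(I\e_1)$ for some interval $I$. This requires a Furstenberg-type argument: the set of $r$ achieved is closed and invariant under the multiplicative semigroup generated by the $\e_1$-eigenvalues of $S$. Those eigenvalues include two multiplicatively independent numbers by Lemma~\ref{lem: NEW two stabilizers}, so the set of achieved $r$ is dense in $\RR_{>0}$. Translating by $S$-orbits and applying minimality then yields $B+\pibar(\RR_{>0}\e_1)\subseteq B$. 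Since the image of $\RR_{>0}\e_1$ is dense by Lemma~\ref{lem:Irrationality} (its image in $\Tpbar$ is dense as well), this gives $B=\Tpbar$. I then need a contradiction with $B$ being minimal but $B-B=\Tpbar$ in a nontrivial way. Cleaner: Berend's lemma is usually stated as ``if $B$ is infinite then $B-B$ contains a line segment''. So I would phrase the contradiction differently: if $B-B=\Tpbar$, then $0$ is not isolated in $B-B$ along a direction in which some element of $S$ acts with an eigenvalue less than $1$ in absolute value. Pulling back then shows $B$ is not minimal unless $B=\Tpbar$. Finally, $\Tpbar$ itself is not minimal, because it contains finite invariant sets: $0$ is fixed by $S$. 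So a minimal $B$ cannot be all of $\Tpbar$.

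\textbf{Main obstacle.} The hardest step is the passage from $B-B=\Tpbar$ to a genuine line segment of differences in a single eigendirection. Differences $w_j$ near $0$ will generally have components in all of $\e_1,\e_2,\e_4$, and $S$ is only a semigroup with $n\ge 0$, so it cannot undo $p$-adic contraction. I would handle this by choosing $w_j$ exactly on the $\pibar(\RR\e_1)$-line, which is possible because $B-B$ is everything, and by using $\theta^{-1}\in S$ to balance the $\e_2$ growth. The multiplicative independence from Lemma~\ref{lem: NEW two stabilizers} is what should make this balancing possible.
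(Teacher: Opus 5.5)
There is a genuine gap in Step~3, and Step~2 does no work. Under the hypothesis $B-B=\Tpbar$, every $\pibar(r\e_1)$ is already a difference of two points of $B$. So no expansion or limiting argument is needed to produce such pairs, and producing them yields nothing new.

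All the content would lie in passing from ``for each $r$ there is \emph{some} pair $(x_r,y_r)\in B\times B$ with $x_r-y_r=\pibar(r\e_1)$'' to ``$B\supseteq z+\pibar(I\e_1)$'' or ``$B+\pibar(\RR_{>0}\e_1)\subseteq B$''. You give no mechanism for this. The pairs depend on $r$, and minimality does not transport a difference from one base point to another: from $y_0,\,y_0+w\in B$ and $g\in\SemiGroup$ you only get $gy_0,\,gy_0+gw\in B$, with the difference changed to $gw\neq w$. The multiplicative independence you invoke only controls the set of admissible $r$, which is all of $\RR$ by hypothesis anyway. And a set whose difference set is everything need not contain any segment: the middle-thirds Cantor set $C$ has $C-C=[-1,1]$.

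Your ``cleaner'' alternative, that non-isolation of $0$ in $B-B$ forces non-minimality, is not an argument either. $0$ is non-isolated in $B-B$ for every infinite closed $B$. So this route would amount to proving Theorem~\ref{thm: densityNEW} from hyperbolicity and dense lines alone, without the periodic-point structure that is exactly what excludes infinite minimal sets. Separately, in Step~1 $\zeta$ \emph{expands} the $\e_4$-direction, since $|b^{-1}p^{-2\ClassNumberNorm}|_p=p^{2\ClassNumberNorm}>1$.

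The missing input is Berend's theory of $\mathcal F$-flows. The paper checks, via \cite[Prop.~II.6]{BerendAdelic}, that $(\Tt_p,\SemiGroup)$ is an $\mathcal F$-flow, using two facts:
\begin{enumerate}
\item $\theta$ acts ergodically, because the change-of-basis matrix $\gamma(\epsilon_\infty^2)$ has eigenvalues $\epsilon_\infty^{\pm 2}$, which are not roots of unity;
\item all determinants are $q$-adic units for some prime $q\neq p$.
\end{enumerate}
This gives ergodic subsemigroups with finite quotient structure whose fixed points become dense, and the structure descends to $\Tpbar$. Then \cite[Thm.~I.2]{BerendAdelic} says a minimal $B$ is \emph{restricted}: if $B+C=\Tpbar$ with $C$ closed and invariant, then $C=\Tpbar$. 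This is where the dense periodic points are used. Take $C=-B$, which is invariant because the maps are endomorphisms. Then $B-B=\Tpbar$ would force $B=\Tpbar$, contradicting minimality since $\{0\}$ is a proper closed invariant subset. Your closing observation about $0$ matches the paper, but the argument before it has to be replaced by this restrictedness input.
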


\begin{proof} 
Using the terminology of \cite[beginning of \S 1 and \S 2]{BerendAdelic} the pair $(\Tt_p,\SemiGroup)$ is a \emph{group flow}, which means that  $\Tt_p$ is a compact metric abelian group and $\SemiGroup$ is a semigroup of continuous endomorphisms.  As noted in Section \ref{sec: periodic}, the space $\Tt_p$ is isomorphic to the standard solenoid $\Omega_p^2=(\RR^2\times \QQ_p^2)/\zloverp^2$, considered in \cite[Ch.~II]{BerendAdelic}. We now claim that $(\Tt_p,\SemiGroup)$ is also an $\mathcal F$-flow (see \cite[Def.~I.4]{BerendAdelic}). 
We recall that the definition of an $\mathcal F$-flow \cite[Def.~I.4]{BerendAdelic} requires the existence of a sequence of sub-semigroups $\SemiGroup:=\Sigma=\Sigma_1\supset \Sigma_2\supset\cdots$ each containing the identity element $I$, such that:
  \begin{enumerate}
      \item $(\Tt_p,\Sigma_k)$ is ergodic for every $k\geq 1$;
      \item $\Sigma/\Sigma_k$ has a finite group structure in the sense defined in \cite[Def.~I.4]{BerendAdelic};
      \item The collection of fixed points for $\Sigma_k$ becomes dense in $\Tt_p$ as $k\rightarrow \infty$.
  \end{enumerate}
According to \cite[Prop.~II.6]{BerendAdelic},  
to verify the $\mathcal F$-flow property,
it is enough to verify that the following two conditions hold: \begin{enumerate}
        \item[(i)] Some $g\in \SemiGroup$ acts ergodically on $\Tt_p$;
        \item[(ii)] There exists some prime $q$ not dividing $p$  such that $\det(g)$ is a $q$-adic unit for every $g\in \SemiGroup$.
    \end{enumerate}
        For part (i), we claim that $\theta$ acts ergodically on $\Tt_p$. Recall the definition of $\theta=\sigma(\epsilon_\infty^2)$  in Lemma \ref{lem: NEW two stabilizers}. Via the isomorphism between $\Tt_p$ and $\Omega_p^2$ mentioned above, the action of $\theta$ on $\Tt_p$ corresponds to the action of the matrix $\gamma:=\gamma(\epsilon_\infty^2)$ on $\Omega_p^2$, as defined in \eqref{eq:ChangeOfBasisMat} for the element $t=\epsilon_\infty^2$. As remarked in Remark \ref{rem:EVofChangeBasisMat}, the eigenvalues of $\gamma$ are $\epsilon_\infty^2,\epsilon_\infty^{-2}$, which are not roots of unity. This implies that $\gamma$ acts ergodically on $\Omega_p^2$, so $\theta$ acts ergodically on $\Tt_p$. Indeed, as the character group of $\Omega_p^2$ is $\zloverp^2$, this follows from \cite[Theorem 1]{halmos}, which states that $\gamma$ acts ergodically as long as its action on $\zloverp^2$ has no non-trivial finite orbits. But if $\gamma^nv=v$ for $0\neq v\in \zloverp^2$, then one of the eigenvalues of $\gamma$ must be a root of unity. 
  To verify part (ii), we also use  Remark \ref{rem:EVofChangeBasisMat}.
  Under the identification $\Tt_p\simeq\Omega_p^2$,
$\theta=\sigma(\epsilon_\infty^2)$ and $\zeta=\sigma(\tau_2^{-2})$ correspond to  
$\gamma(\epsilon_\infty^2)$ and $\gamma(\tau_2^{-2})$,
and their determinants can be computed in terms of real eigenvalues
of $\theta$ and $\zeta$, which allows to verify (ii).
This shows that $(\Tt_p,\SemiGroup)$ is an $\mathcal F$-flow.

Let $\Sigma_k$ denote the sequence of sub-semigroups which arise from the definition of $(\Tt_p,\SemiGroup)$ being an $\mathcal{F}$-flow.
Since $\Tpbar$ is a quotient of $\Tt_p$,
it is straightforward to check that 
with the same choice of $\Sigma_k$,
$(\Tpbar,\SemiGroup)$ satisfies conditions (a)--(c),
so that $(\Tpbar,\SemiGroup)$ is also an $\mathcal F$-flow.

     We can now use \cite[Thm.~I.2]{BerendAdelic} and the assumption that $B$ is a minimal closed $\SemiGroup$-invariant subset to conclude that $B$ is \emph{restricted} (see \cite[Def.~I.5]{BerendAdelic}). That is, if $B+C=\Tpbar$ for a closed $\SemiGroup$-invariant subset $C$, then $C=\Tpbar$. Clearly, if $B-B=\Tpbar$, then the fact that $B$ is restricted would imply that $-B=\Tpbar$, which further implies that $B=\Tpbar$. However, $\Tpbar$ always contains the orbit corresponding to the zero vector and therefore cannot be minimal, which violates the minimality of $B$.
\end{proof}

On the other hand, we will show:
\begin{lemma}\label{lem: density of difference}
	If $B$ is an infinite closed $\SemiGroup$-invariant subset of $\Tpbar$, then 
    $$
    B-B=\Tpbar.
    $$
\end{lemma}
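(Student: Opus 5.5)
We follow the strategy of Furstenberg and Berend for $\times2,\times3$-type results: start from accumulation points of $B$, pass to the closed invariant set $D:=\overline{B-B}$, which contains $0$ as a non-isolated point, and show that $D$ must be all of $\Tpbar$.

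\textbf{Step 1: a small nonzero vector in $D$.} Since $B$ is infinite and $\Tpbar$ is compact, $B$ has an accumulation point. Hence there are pairs $b_k\neq b_k'$ in $B$ with $d_k:=b_k-b_k'\to 0$. Each $d_k$ lies in $D$, which is closed and $\SemiGroup$-invariant because each of $\theta$ and $\zeta$ is a continuous endomorphism. Near $0$ we lift $d_k$ to a small vector $w_k$ in the fundamental domain $\barOurFunDo$. Its coordinates are real coordinates $(w_{k,1},w_{k,2})$ in the eigenbasis $\e_1,\e_2$ together with a $p$-adic coordinate along $\e_4$. The $\e_3$-direction has been quotiented out, and this is exactly why we work in $\Tpbar$ rather than $\Tt_p$.

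\textbf{Step 2: expand and rescale.} On the lifts, $\theta$ acts by $\diag(e^{t_0},e^{-t_0})$ in the real coordinates and by multiplication by the unit $a$ in the $\e_4$-coordinate. The element $\zeta$ acts by $p^{-\ClassNumberNorm}e^{\pm s_0}$ on the real coordinates and by $b^{-1}p^{-2\ClassNumberNorm}$ on the $\e_4$-coordinate, so it expands $p$-adically. We distinguish cases according to which coordinate of $w_k$ dominates, say $w_{k,1}\neq 0$.

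We then apply elements $g=\theta^m\zeta^n$ with $n\ge 0$ chosen so that $g w_k$ stays in a fixed small neighbourhood of $0$ in $\RR^2\times\QQ_p^2$, where the projection to $\Tpbar$ is injective and linear. This is where Lemma \ref{lem: NEW two stabilizers} enters. The multiplicative independence of $\{e^{t_0},p^{-\ClassNumberNorm}e^{s_0}\}$ makes the set of expansion factors $e^{mt_0}(p^{-\ClassNumberNorm}e^{s_0})^n$ dense in ratio in $\RR_{>0}$, in the sense of Berend's lemma for non-lacunary semigroups: the ratios of consecutive elements tend to $1$ at large scale. The remaining coordinates must meanwhile be kept small, and this uses the independence of the other two pairs.

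With these choices, the $\e_1$-coordinate of $g w_k$ takes values forming an $\epsilon_k$-dense subset of an interval $[0,c]$ with $\epsilon_k\to 0$, while the other coordinates stay bounded by $o(1)$ or are controlled. For the $p$-adic coordinate we additionally pass to powers of $\theta$ along which $a^m\to 1$, using compactness of $\ZZ_p^\times$. Letting $k\to\infty$, we conclude that $D\supset\{\pi(s\e_1):0\le s\le c\}$ for some $c>0$.

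\textbf{Step 3: conclude density.} Since $D$ is a closed set containing a segment of the ray and invariant under $\theta$, which stretches the $\e_1$-direction by $e^{t_0}$, we get $D\supset\overline{\{\pibar(s\e_1):s>0\}}$. By Lemma \ref{lem:Irrationality} this ray is dense in $\Tt_p$, hence its image is dense in $\Tpbar$, so $D=\Tpbar$. Finally, $B-B$ is the image of the compact set $B\times B$ under subtraction, so it is already closed and $B-B=D=\Tpbar$. The other dominating-coordinate cases are handled symmetrically. If the real part of $w_k$ vanishes, the $p$-adic coordinate is used: $\zeta$ expands it, and Lemma \ref{lem:Irrationality} for $V_4$ concludes the argument.

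\textbf{Main obstacle.} We expect the hardest part to be Step 2: simultaneously making one coordinate land in a prescribed range while keeping the other real coordinate and the $p$-adic coordinate small, using only nonnegative powers of $\zeta$. Our first attempt would be to emulate Berend's argument, invoking his lemma on semigroups $\{\lambda_1^{m}\lambda_2^{n}\}$ with $\log\lambda_1/\log\lambda_2$ irrational in each of the three coordinate pairings of Lemma \ref{lem: NEW two stabilizers}. If the contracted coordinates cannot be kept small, the fallback is to work in a subsolenoid, namely the closure of the relevant line, and argue by induction on dimension as Berend does.
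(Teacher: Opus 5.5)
\textbf{Gap in Step 2.} Your outline is the paper's: small nonzero differences in $B-B$, blow one up along a coordinate direction, then finish with Lemma~\ref{lem:Irrationality}. Steps 1 and 3 are fine. Step~2, which you flag as the main obstacle, is not carried out, and the mechanisms you cite for it are not the relevant ones.

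When you apply $\theta^m\zeta^n$ to $w_k$, the danger is that $\zeta$ multiplies the $\e_4$-coordinate by $b^{-1}p^{-2n_0}$, whose $p$-adic size is $p^{2n_0}$. Passing to $m$ with $a^m\to1$ does not address this, since the unit $a$ never changes $p$-adic size. The independence of the pairs $\{e^{t_0},p^{-n_0}e^{-s_0}\}$ and $\{e^{2s_0},e^{2t_0}\}$ also plays no role in keeping coordinates small.

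You restrict to $g$ keeping $gw_k$ in a small neighbourhood. But you never show that enough admissible $g$ remain to make the $\e_1$-coordinates $\epsilon_k$-dense with $\epsilon_k\to0$, while the other coordinates tend to $0$. Merely staying bounded would not put the limit on the $\e_1$-line. So $D\supset\{\pibar(s\e_1):0\le s\le c\}$ is unsupported as written.

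The direct route can be repaired without any subsolenoid induction:
\begin{itemize}
\item Restrict to $0\le n\le N_k$ with $N_k\to\infty$ and $p^{2n_0N_k}|[w_k]_4|_p\to0$. This is possible because $|[w_k]_4|_p\to0$.
\item Independence of $e^{t_0}$ and $p^{-n_0}e^{s_0}$ makes $\{mt_0+n(s_0-n_0\log p): m\in\ZZ,\ 0\le n\le N_k\}$ increasingly dense.
\item The $\e_2$-coordinate takes care of itself. The two real scalings multiply to $p^{-2n_0n}\le1$, so $|[gw_k]_2|\le|[w_k]_1[w_k]_2|/c'$ whenever $|[gw_k]_1|\ge c'$.
\end{itemize}

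\textbf{How the paper avoids this.} The paper decouples the two stages. It first uses only $\theta^{\pm1}$, which scales coordinates $1,2,4$ by $e^{t_0},e^{-t_0},a$. It chooses $\ell(k)$ with $|[\theta^{\ell(k)}x_{j_k}]_1|\in[\eta,M\eta]$. The second coordinate is contracted and the fourth keeps its vanishing size. So every limit point is a pure vector $y=[y]_1\e_1$ with $[y]_1\ne0$, and $\pibar(y)\in B-B$ because $B-B$ is closed and invariant. Only then is the full semigroup applied. On $y$ it acts by the scalars $e^{lt_0}(p^{-n_0}e^{s_0})^k$, whose closure is $\RR_{>0}$, and there is nothing else to control.

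\textbf{The $p$-adic case.} Here ``$\zeta$ expands it'' is not quite enough. The orbit scalars $a^lb^{-k}p^{-2n_0k}[y]_4$ do not fill balls around $0$. The paper uses $\overline{\{a^l\}}\supseteq1+p^r\ZZ_p$ to get translates $z_k+S_k$ of the increasing sets $S_k=\pibar(p^{r-2n_0k}[y]_4\ZZ_p\e_4)$. These become uniformly dense, hence so do the translates.
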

Theorem \ref{thm: densityNEW} would now follow directly from Lemma \ref{ref:BerendMagic} and Lemma \ref{lem: density of difference}.

\subsection{Proof of Lemma \ref{lem: density of difference}}\label{sec:lem proof}

We use the basis $\e_1,\e_2,\e_3,\e_4$ defined in \eqref{eq:vector} and write $x\in \RR^2\times\QQ_p^2$ as
 $$
 x=[x]_{1}\e_1+[x]_{2}\e_2+[x]_{3}\e_3+[x]_{4}\e_4.
 $$
 Let $B$ be an infinite closed $\SemiGroup$-invariant subset of $\Tpbar$.
\begin{claim}\label{cl: oneCoordinatePt}
		There exist $i\in \set{1,2,4}$ and a scalar $[y]_i\in \RR\setminus\{0\}$, if $i=1,2$, and $[y]_4\in \ZZ_p\setminus\{0\}$ if $i=4$, such that $y:=[y]_i \e_i \in \pibar^{-1}(B-B)$.
	\end{claim}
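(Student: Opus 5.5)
The plan is to take small nonzero differences in $B-B$, and then use the semigroup $\SemiGroup$ to blow up one coordinate to unit size while the other two shrink to zero. A limit point of these renormalized differences will give $y$.

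\emph{Step 1: small differences.} Since $B$ is infinite and compact, there are pairs of distinct points $b_k\neq b_k'$ in $B$ with $b_k-b_k'\to 0$ in $\Tpbar$. Using the fundamental domain $\barOurFunDo$ from \eqref{eq:d0property1}, lift $b_k-b_k'$ to vectors
$$w_k=[w_k]_1\e_1+[w_k]_2\e_2+[w_k]_4\e_4 \in \pibar^{-1}(B-B),$$
with $w_k\neq 0$ and $w_k\to 0$ in $\RR^2\times\ZZ_p^2$. The first $p$-adic coordinate can be killed because we work modulo $V_p=\pi(\ZZ_p\e_3)$. Note that $B-B$ is closed and $\SemiGroup$-invariant, since $\theta$ and $\zeta$ act by continuous group endomorphisms. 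Moreover $\theta$ and $\zeta$ are diagonal in the basis $\e_1,\dots,\e_4$, so $\theta^m\zeta^n w_k$ is again a lift of an element of $B-B$ with vanishing $\e_3$-coordinate. If some $w_k$ has two vanishing coordinates among $1,2,4$, we already have the claim after rescaling. This uses Lemma~\ref{lem:Irrationality} only implicitly; it is not needed if one is careful. So we may assume each $w_k$ has at least two nonzero coordinates.

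\emph{Step 2: the scaling bookkeeping.} By Lemma~\ref{lem: NEW two stabilizers}, the element $g=\theta^m\zeta^n$ with $m\in\ZZ$, $n\in\NN$ multiplies the three coordinates as follows:
\begin{itemize}
\item coordinate $1$ by $e^{mt_0}\,(p^{-n_0}e^{s_0})^n$;
\item coordinate $2$ by $e^{-mt_0}\,(p^{-n_0}e^{-s_0})^n$;
\item coordinate $4$ by a $p$-adic unit times $p^{-2n_0 n}$.
\end{itemize}
Hence the $p$-adic absolute value of coordinate $4$ grows by $p^{2n_0n}$. The quantity $N$ is preserved, i.e. $|x_1x_2|\,|x_4|_p$ is invariant. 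The logarithmic scalings of coordinates $1$ and $4$ are given by the vectors $(t_0,0)$ and $(s_0-n_0\log p,\,2n_0\log p)$. These are linearly independent, so they span a cocompact lattice in $\RR^2$. Therefore, for any prescribed target sizes $(\rho_1,\rho_4)$ we can choose $(m,n)$ with $|[gw]_1|\asymp\rho_1$ and $|[gw]_4|_p\asymp\rho_4$, up to uniform multiplicative constants. The only restriction is $n\ge 0$, which means $\rho_4\gtrsim |[w]_4|_p$. The size of coordinate $2$ is then forced by invariance of $N$. The symmetric statement with the roles of $1$ and $2$ exchanged also holds.

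\emph{Step 3: choosing targets and passing to a limit.} Suppose first that $[w_k]_1\neq 0$. Aim for $|[g_kw_k]_1|\asymp 1$ and $|[g_kw_k]_4|_p\asymp\varepsilon_k$, where $\varepsilon_k:=\max\bigl(|[w_k]_4|_p,\sqrt{N(w_k)}\bigr)$ (or $\varepsilon_k:=|[w_k]_4|_p$ when $N(w_k)=0$). Then $\varepsilon_k\to 0$ and the constraint $n\ge0$ is satisfied. Coordinate $2$ becomes of size $N(w_k)/\varepsilon_k\le\sqrt{N(w_k)}\to0$ when $[w_k]_4\ne0$. When $[w_k]_4=0$, only $\theta$ is needed and coordinate $2$ shrinks automatically. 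The case $[w_k]_1=0$ is handled symmetrically with coordinate $2$. If both real coordinates vanish, we are already done with $i=4$.

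Passing to a subsequence, the vectors $g_kw_k$ converge to $y=[y]_i\e_i$ with $[y]_i\ne0$, because its size stays between fixed constants. Since $B-B$ is closed, we get $\pibar(y)\in B-B$.

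The main obstacle is Step~3: controlling the sizes uniformly. The bounded multiplicative errors coming from discreteness of $(m,n)$ must not spoil the limits, and the one-sided constraint $n\in\NN$ must be compatible with the targets. This is exactly where the choice of $\varepsilon_k$ and the multiplicative independence from Lemma~\ref{lem: NEW two stabilizers} (which gives cocompactness of the exponent lattice) are used.
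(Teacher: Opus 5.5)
Your argument is correct, and its skeleton matches the paper's. You take small nonzero lifts $w_k\to 0$ of elements of $B-B$ with vanishing $\e_3$-coordinate, renormalize them by elements of $\theta^{\ZZ}\zeta^{\NN}$ so that one coordinate has size $\asymp 1$, and pass to a limit using that $B-B$ is compact and invariant.

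The only difference is the choice of renormalizing elements. The paper uses powers $\theta^{\ell}$ alone. Since the $p$-adic entries of $\theta$ are units, $|[\theta^{\ell}w_k]_4|_p=|[w_k]_4|_p\to 0$. The exponent $\ell$ is forced to be nonnegative once $|[w_k]_1|$ is small, so the second coordinate is automatically contracted and no size bookkeeping is needed.

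Your $\zeta$-factor, the choice of $\varepsilon_k$, and the control of the second coordinate through the invariance of $N$ all work, but they are superfluous. Taking $n=0$ in your scheme is exactly the paper's proof. In that case your bound on the second coordinate is just $|[w_k]_1[w_k]_2|\to 0$, up to a constant.

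One small misattribution: the cocompactness of your exponent lattice comes from the evident linear independence of $(t_0,0)$ and $(s_0-n_0\log p,\,2n_0\log p)$. The multiplicative independence from Lemma \ref{lem: NEW two stabilizers} plays no role in this claim; it is needed only in the following claim.
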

\begin{proof}
Recall that $\barOurFunDo$ (see \eqref{eq:d0property1})
denotes a fundamental domain for $\Tpbar$.
Since $\Tpbar$ is compact and $B$ is infinite, 
there exist distinct $x_j',x'\in \barOurFunDo$
such that $\pibar(x_j')\rightarrow\pibar(x')$.
Setting $x_j=x_j'-x'$, we have
$$
x_j\in \pibar^{-1}(B-B), \quad x_j\to 0,\quad x_j\ne 0.
$$ 
Since $x_j'\in\barOurFunDo$, it follows that $[x_j]_3=0$ and $[x_j]_4\in\ZZ_p$.

    Assume first that $[x_{j_k}]_1\ne 0$ along a subsequence $j_k$. Note that the element $\theta$ expands the first coordinate and contracts the second coordinate and does not change the size of the fourth coordinate (cf. \eqref{eq:alpha def}). Fix $\eta>0$ and for each $k$ choose $\ell(k)$ such that $\eta\leq\left|[\theta^{\ell(k)}x_{j_k}]_1\right|\leq M\eta$, where $M>1$. This is possible with any $M$ larger than the operator norm of $\theta$. As $x_{j_k}\to 0$, and $\theta$ does not expand any other coordinate, any accumulation point $y$ of $\theta^{\ell(k)}x_{j_k}$ must be of the form $[y]_1\e_1$ for some non-zero real number $[y]_1$.
    When $[x_{j_k}]_2\ne 0$ along a subsequence $j_k$,
    the argument above works upon replacing $\theta$ by $\theta^{-1}$ and gives that $[y]_2\e_2\in \pibar^{-1}(B-B)$ with $[y]_2\ne 0$. In the remaining case,
    $x_j=[x_j]_4\e_4$ with $[x_j]_4\ne 0$ for all sufficiently large $j$.
\end{proof}

    We are now ready to conclude the proof of Lemma \ref{lem: density of difference} by establishing the following claim.
\begin{claim}\label{cl: density claim}
        Let $z=\pibar(y)$ with $y$ as in Claim \ref{cl: oneCoordinatePt}.
		Then
        $$
        \overline{\SemiGroup z}=\Tpbar.
        $$
\end{claim}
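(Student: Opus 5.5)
We treat separately the three possibilities for $i$ from Claim \ref{cl: oneCoordinatePt}. In each case we identify a large family of scalar multiples of $y$ inside $\pibar^{-1}(\overline{\SemiGroup z})$, and then use the density of the coordinate lines from Lemma \ref{lem:Irrationality}. Since $\piti:\Tt_p\to\Tpbar$ is a continuous surjection, the image under $\pibar$ of any set whose $\pi$-image is dense in $\Tt_p$ is dense in $\Tpbar$. So it is enough to produce such sets inside $\pibar^{-1}(\overline{\SemiGroup z})$.

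\emph{Case $i=1$.} By \eqref{eq:alpha def} and \eqref{eq:beta def}, $\theta^m\zeta^n$ multiplies $\e_1$ by the real scalar
\[
e^{mt_0}\bigl(p^{-\ClassNumberNorm}e^{s_0}\bigr)^n = e^{mt_0+nc}, \qquad c:=s_0-\ClassNumberNorm\log p .
\]
By Lemma \ref{lem: NEW two stabilizers}, $e^{t_0}$ and $p^{-\ClassNumberNorm}e^{s_0}$ are multiplicatively independent, so $t_0/c\notin\QQ$. Hence $\{nc \bmod t_0 : n\in\NN\}$ is dense in $\RR/t_0\ZZ$, because forward orbits of an irrational rotation are dense. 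Adding $mt_0$ with $m\in\ZZ$ then shows that $\{mt_0+nc\}$ is dense in $\RR$.

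It follows that the scalars $[y]_1e^{mt_0+nc}$ are dense in the ray $\operatorname{sign}([y]_1)\RR_{>0}$. Since $\overline{\SemiGroup z}$ is closed, it contains $\pibar(s\e_1)$ for every $s$ in that ray. If the ray is negative, replace it by its negative: the closure of $\{\pi(s\e_1): s<0\}$ is a closed subsemigroup of $\Tt_p$ and hence a group, exactly as at the end of the proof of Lemma \ref{lem:Irrationality}. By Lemma \ref{lem:Irrationality}, $V_1$ is dense in $\Tt_p$, so $\overline{\SemiGroup z}=\Tpbar$.

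\emph{Case $i=2$.} The argument is identical. Now $\theta^m\zeta^n$ scales $\e_2$ by $e^{-mt_0}\bigl(p^{-\ClassNumberNorm}e^{-s_0}\bigr)^n$, and we use the multiplicative independence of $e^{t_0}$ and $p^{-\ClassNumberNorm}e^{-s_0}$ from Lemma \ref{lem: NEW two stabilizers}.

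\emph{Case $i=4$, the main obstacle.} Here $y=c\e_4$ with $0\ne c\in\ZZ_p$, and $\theta^m\zeta^n$ scales $\e_4$ by $a^m b^{-n}p^{-2n\ClassNumberNorm}$. The difficulty is that the orbit closure is not a group, so the density of $V_4$ cannot be used directly. The plan has four steps.

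\begin{enumerate}
\item Since $a\in\ZZ_p^\times$ has infinite order, $U:=\overline{a^{\ZZ}}$ is an infinite closed subgroup of $\ZZ_p^\times$, hence open. So $U\supseteq 1+p^r\ZZ_p$ for some $r$.

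\item Set $x_n:=cb^{-n}p^{-2n\ClassNumberNorm}$. Since $\overline{\SemiGroup z}$ is closed, it contains $\pibar(w\e_4)$ for all
\[
w\in x_nU\supseteq x_n+p^{r-2n\ClassNumberNorm}c\,\ZZ_p=x_n+p^{-m_n}\ZZ_p,
\]
where $m_n\to\infty$.

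\item The sets $H_m:=\pibar(p^{-m}\ZZ_p\e_4)$ form an increasing sequence of compact subgroups of $\Tpbar$. Their union $\pibar(\QQ_p\e_4)$ is dense by Lemma \ref{lem:Irrationality}. A compactness argument then shows that for each $\varepsilon>0$, $H_m$ is $\varepsilon$-dense for all large $m$: cover $\Tpbar$ by finitely many $\varepsilon/2$-balls, each meeting some $H_m$.

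\item Step 2 shows that $\overline{\SemiGroup z}\supseteq\pibar(x_n\e_4)+H_{m_n}$. Each such translate is $\varepsilon$-dense for large $n$, since translation preserves the invariant metric. Hence $\overline{\SemiGroup z}$ is $\varepsilon$-dense for every $\varepsilon>0$, and being closed, it equals $\Tpbar$.
\end{enumerate}

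The step most likely to need care is this last case. In particular, one must check that the metric on $\Tpbar$ can be taken translation-invariant, and that the reduction to $\ZZ_p$ coefficients in the fundamental domain \eqref{eq:d0property1} does not interfere.
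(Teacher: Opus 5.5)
Your proposal is correct and follows the paper's proof essentially step for step. It uses the same three-case split and the multiplicative independence from Lemma \ref{lem: NEW two stabilizers} together with Lemma \ref{lem:Irrationality} for $i=1,2$. For $i=4$ it uses the same open subgroup $1+p^r\ZZ_p\subseteq\overline{a^{\ZZ}}$ combined with the eventual $U$-density of the compact subgroups $\pibar(p^{-m}\ZZ_p\e_4)$ and their translates. Your extra details (the irrational-rotation argument for $n\in\NN$, the negative ray, the translation-invariant metric) are all sound and merely make explicit what the paper leaves implicit.
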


	\begin{proof}
		  Suppose that $y=[y]_4\e_4$ for some $[y]_4\in\ZZ_p\setminus\{0\}$.  
		Then 
		\begin{equation}\label{eq:4case}
			\SemiGroup z=\left\{\pibar\left(a^lb^{-k}p^{-2\ClassNumberNorm k}[y]_4 \e_4\right):l\in\ZZ, k\in \NN\right\},
		\end{equation}		
		where $a,b\in \ZZ_p^\times$ and $n_0\in \ZZ_{>0}$ are the parameters appearing in the definition of $\theta$ and $\zeta$ (cf. \eqref{eq:alpha def}--\eqref{eq:beta def}). 
        Since $a$ has infinite order, the closure
        $\overline{\{ a^{l} \mid l \in \ZZ \}}$
        contains a subgroup $1+p^r\ZZ_p$ for some $r \in \NN$. Therefore, 
        \begin{align*}
            z_k+S_k \subset \overline{\SemiGroup z}\quad
            \hbox{for all $k\in \NN$,}
        \end{align*}
        where $z_k:=\pibar(b^{-k}p^{-2n_0k}[y]_4\e_4)$
        and $S_k:=\pibar(p^{r-2n_0k}[y]_4\ZZ_p\e_4)$.
        It follows from Lemma \ref{lem:Irrationality}
        that $\cup_{k\in\NN} S_k$ is dense in 
        $\Tpbar$. In fact, using compactness for any neighborhood
        $U$ of identity in $\Tpbar$,  the sets $S_k$ are $U$-dense in $\Tpbar$ for all $k\ge k_0(U)$.
        Then also the sets $z_k+S_k$ are $U$-dense in $\Tpbar$ for all $k\ge k_0(U)$. Therefore, we conclude that 
        $\cup_{k\in\NN}  (z_k+S_k)$ is dense in 
        $\Tpbar$, which proves the claim.
        		
		Since the cases $i=1$ and $i=2$ are similar, we only consider the case $i=1$ here. In this case, 
        $$
            \SemiGroup z=\left\{\overline{\pi}(e^{l t_0}p^{- k n_0}e^{ k s_0}[y]_1\e_1):l\in \ZZ,k\in \NN\right\}
		$$
        with $[y]_1\in \RR\backslash\{0\}$.
        Since $p^{-n_0}e^{s_0}$ and $e^{t_0}$ are multiplicatively independent by Lemma \ref{lem: NEW two stabilizers}, we have 
        \begin{align*}
            \overline{\{e^{l t_0}p^{-k n_0}e^{k s_0}:\,l\in \ZZ,k\in \NN\}}=\RR_{>0}.
        \end{align*}
        Hence, the density follows from Lemma \ref{lem:Irrationality}.
	\end{proof}

\section{Proof of the main theorem: generic case}\label{sec:gen}

In this section, we prove the main theorem (Theorem \ref{thm: main}) for almost every $\alpha\in\RR$. Our key tool is the following proposition:

\begin{proposition}\label{prop: dense implies zeroV2}
Let $u_K\in \ZZ_p^\times$ be as in \eqref{eq: u def}
and $x\in X_{u_K}$ such that $\pdiag x$ is dense in $X_{u_K}$. 
Then for every $y\in Y$ in the fiber over $x$,
\begin{equation}\label{eq:need to prove}
   \inf\left\{0\neq N(u):u=\cvmatr{u_1}{u_2}{u_3}{u_4}\in A y,  u_3, u_4\in\ZZ_p\right\}=0.   
\end{equation}
\end{proposition}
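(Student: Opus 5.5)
The strategy is to move the question to the fiber over the periodic point $\TheLattice$ and use the results on the $p$-adic torus $\Tpbar$. The target is an open condition, so it suffices to find, in the orbit closure $\overline{Ay}$, one grid that contains a point with small nonzero $N$.

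First, the reduction. Fix $\varepsilon>0$. The condition on a grid $y''$ that it contains a point $u$ with $u_3,u_4\in\ZZ_p$, $u_1u_2u_4\neq0$ and $N(u)<\varepsilon$ is open in $Y$. Here we use that $\ZZ_p$ and $\ZZ_p\setminus p^m\ZZ_p$ are open, and that $N$ is continuous away from the coordinate hyperplanes. Hence if some $y''\in\overline{Ay}$ has this property, then so does some $ay$ with $a\in A$, and \eqref{eq:need to prove} follows because $\varepsilon$ is arbitrary. Now choose $a_k\in A$ with $a_kx\to\TheLattice$, which is possible since $Ax$ is dense in $X_{u_K}$. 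The fibers of $Y\to X$ over a compact neighbourhood of $\TheLattice$ form a compact set, so after passing to a subsequence $a_ky\to y'$, where $y'=\Lambda_p+w$ lies in the fiber $\Tt_p$ over $\TheLattice$. Let $F\subset\Tt_p$ be the set of grids in $\overline{Ay}$ lying over $\TheLattice$. By Lemma \ref{lem: NEW two stabilizers}, $\theta$ and $\zeta$ lie in $A$ and fix $\TheLattice$, so $F$ is closed, nonempty and $\SemiGroup$-invariant. The coordinate $u_3$ plays no role beyond the requirement $u_3\in\ZZ_p$, so translating a grid by $\ZZ_p\e_3$ changes nothing. We may therefore pass to the image $\overline F:=\piti(F)\subset\Tpbar$, which is exactly why the quotient by $V_p$ was introduced.

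Second, the sufficient target. It is enough to show that $\overline F$ contains grids $\pibar(w)$ with $w$ arbitrarily close to $0$ (mod $V_p$) and with all of $[w]_1,[w]_2,[w]_4$ nonzero. Such a grid $\Lambda_p+w$ contains the point $w$ itself, after adjusting $[w]_3$ into $\ZZ_p$ by $V_p$. For this point $u_4\in\ZZ_p\setminus\{0\}$ and $0<N(w)=|[w]_1[w]_2|\,|[w]_4|_p$ is as small as we like. In particular, the claim follows if $\overline F=\Tpbar$. Note that periodic (rational) points alone cannot suffice. If $\overline F$ consisted only of grids $\Lambda_p+\sigma(c)$ with $c\in K$, the product formula gives a uniform lower bound on $N$ over the admissible points of these grids, since the denominators of $c$ are bounded and $u_3=\sigma_1^p(\cdot)\in\ZZ_p$. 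So some genuinely non-finite dynamics must be used.

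Third, proving $\overline F=\Tpbar$, which is the main step and the expected obstacle. I would argue in the style of Shapira, using the density of $Ax$ a second time to enlarge $F$. Density gives more than $a_kx\to\TheLattice$: for every small $g$ in $\on{SL}_2(\RR\times\QQ_p)$, it also gives $a'_kx\to g\TheLattice$. The limits $g\TheLattice$ can then be pushed back to $\TheLattice$ using elements of $A$ that contract $g$. For $g$ in the stable horospherical subgroup of a suitable element of $\SemiGroup$, the limiting grids over $\TheLattice$ obtained this way are translates of points of $F$ by vectors along the coordinate lines $\RR\e_1,\RR\e_2,\QQ_p\e_4$. The aim is to show that $\overline F$ is infinite, and then that it contains $B-B$ for the infinite invariant set $B$ produced. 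Lemma \ref{lem: density of difference} would then give $\overline F=\Tpbar$. Alternatively, suppose $\overline F$ contains a translate of a coordinate ray. Then Lemma \ref{lem:Irrationality} (rays in the $\e_1$, $\e_2$ directions, or the $p$-adic line $\QQ_p\e_4$) together with $\SemiGroup$-invariance, as in the proof of Claim \ref{cl: density claim}, gives density directly. Theorem \ref{thm: densityNEW} enters as follows. Any minimal subset of $\overline F$ is finite, so it consists of torsion points, and these points are fixed by a finite-index subsemigroup $\SubSemigroup$. Working near such a fixed point, the horospherical translation argument produces grids of the form ``fixed point plus a small vector on a coordinate line'' inside $\overline F$. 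Orbits of these under $\SubSemigroup$ are dense by the argument of Claim \ref{cl: density claim}, with $\theta^\ell,\zeta^\ell$ in place of $\theta,\zeta$; the multiplicative independence in Lemma \ref{lem: NEW two stabilizers} survives passing to $\ell$-th powers. This yields $\overline F=\Tpbar$. The delicate point is to make the horospherical translation rigorous: the shifts must be nonzero, stay on the correct coordinate line, and have a limit that is taken uniformly in $k$. This is the step I expect to require the most care.
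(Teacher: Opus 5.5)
There is a genuine gap. The target of your third step, $\overline F=\Tpbar$, is false in general, and so is the weaker sufficient condition of your second step. Take $y=(x;0)$, the lattice itself, which is covered by the statement (it is the case $\delta=\kappa=0$). The zero section $X\hookrightarrow Y$ is closed, since $\zloverp^2$ is discrete in $\RR^2\times\QQ_p^2$. Hence $\overline{Ay}$ lies in the zero section and $F=\{(\TheLattice;0)\}$. More generally, $F$ is finite whenever $y$ is a torsion point of its fibre. Your own product-formula remark then shows that these grids have $N$ bounded below on admissible points. For $\Lambda_p$ itself, a vector coming from $\xi\in\Oo_{K,S}\setminus\{0\}$ with $\sigma_1^p(\xi),\sigma_2^p(\xi)\in\ZZ_p$ has
$N=d_K^{-1}|\operatorname{Nr}_{K/\QQ}(\xi)|\,|\sigma_2^p(\xi)|_p\ge d_K^{-1}|\operatorname{Nr}_{K/\QQ}(\xi)|\,|\operatorname{Nr}_{K/\QQ}(\xi)|_p\ge d_K^{-1}$.
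So no argument that only uses grids of $\overline{Ay}$ lying over $\TheLattice$ can prove the proposition.

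The step that breaks is your claim that the horospherical limits over $\TheLattice$ are translates along coordinate lines. The expanding group $U_\theta^+$ consists of the elements $u(z_1,z_2)$. Only $u(0,z_2)$ is a fibre translation (by $z_2\e_1$); $u(z_1,0)$ is a unipotent element of $G$ that moves the base point off $\TheLattice$. Expanding with $\theta^{\ell k_n}$ gives $u(z,z')(\TheLattice;\tilde w)\in\overline{Ay}$, and in the homogeneous case $z'=0$ identically.

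You also cannot remove $z$ using the stabiliser. Conjugation by $\theta^{\ell m}\zeta^{\ell n}$ multiplies $z$ by $e^{2\ell(mt_0+ns_0)}$ and $z'$ by $e^{\ell(mt_0+ns_0)}p^{-n_0\ell n}$. Since $p^{-n_0\ell n}\le 1$ for $n\in\NN$, sending the first factor to $0$ forces the second to $0$ as well.

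The missing idea is to use these off-fibre grids directly, as the paper does. Since $\piti(w)$ is $\SubSemigroup$-fixed, one has $\theta^{\ell m}\zeta^{\ell n}u(z,0)(\TheLattice;w)=u(sz,0)(\TheLattice;w+\pi(c\e_3))$ with $c\in\ZZ_p$, where $s=e^{2\ell(mt_0+ns_0)}$ ranges over a dense subset of $\RR_{>0}$. Each such grid contains $\bigl(\binom{x_1+szx_2}{x_2},\binom{x_3'+c}{x_4}\bigr)$ for any $\bigl(\binom{x_1}{x_2},\binom{x_3'}{x_4}\bigr)\in(\TheLattice;w)$. Choose such a vector with $x_3'\in\ZZ_p$, $x_4\in\ZZ_p\setminus\{0\}$ and $x_1(zx_2)<0$. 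Tuning $s$ then makes $N$ arbitrarily small and nonzero.

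Your fibre-translation picture applies only in the complementary case $z=0$. There, your density argument via Lemma~\ref{lem:Irrationality} is essentially what the paper does.
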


In the proof we use the lattice $x_K\in X_{u_K}$,
introduced in \eqref{eq:THELattice},
and the elements $\theta,\zeta\in \Stab_A(x_K)$,
defined in \eqref{eq:alpha def}--\eqref{eq:beta def}.
Furthermore, 
we use the spaces 
$\Tt_p$ and $\Tpbar$ defined in \eqref{eq:Ttpdef} and \eqref{eq:Tpdef} and recall that 
$\piti:\Tt_p\rightarrow \Tpbar$
denotes the corresponding quotient map.

To simplify notation, we write
elements $y\in Y$ as pairs $(x;v)$, where $x\in X$ and $v\in  (\RR\times\QQ_p)^2/\Lambda_x$. Namely, the pair $(x;v)$ corresponds to the grid $\Lambda_x+v$. In particular, for $w\in\RR^2\times\QQ_p^2$ and any $(x;v)\in Y$, the notation $w\in (x;v)$ means that the vector $w$ belongs to the grid $v+\Lambda_x$. Similarly, $w\in x$ would mean that the vector $w$ belongs to the lattice $\Lambda_x$.

We embed $X$ in $Y$ via the natural injection $x\mapsto (x;0)$, where $0$ denotes the coset corresponding to the zero vector. Note that the restriction of the natural action of $G$ to $\GLT \ltimes (\RR\times\QQ_p)^2$ satisfies
$$
g(x;v)=(gx;gv),\quad  g\in G,\,(x;v) \in Y.
$$

Let $\gG=\gG_\infty\oplus \gG_p$ be the Lie algebra of $\GLT \ltimes (\RR\times \QQ_p)^2$. 
and 
\begin{equation*}
\begin{split}
    \uU_\theta^+:=\on{span}_\RR &\big\{v_\infty\in \gG_\infty: \on{Ad}_\theta(v_\infty,0)=\lambda (v_\infty,0)\text{,}\av{\lambda}>1\big\}\\&\oplus \on{span}_{\QQ_p}\big\{v_p\in \gG_p:\on{Ad}_\theta(0,v_p)=\lambda'(0,v_p)\text{,}\av{\lambda'}_p>1\big\},
    \end{split}
\end{equation*}
\begin{equation*}
\begin{split}
    \uU_\theta^-:=\on{span}_\RR &\big\{v_\infty\in \gG_\infty: \on{Ad}_\theta(v_\infty,0)=\lambda (v_\infty,0)\text{,}\av{\lambda}\leq1\big\}\\&\oplus \on{span}_{\QQ_p}\big\{v_p\in \gG_p:\on{Ad}_\theta(0,v_p)=\lambda'(0,v_p)\text{,}\av{\lambda'}_p\leq1\big\}.
    \end{split}
\end{equation*}
Since the adjoint action of $\theta$ on $\gG$
is diagonalizable, $\gG=\uU_\theta^+\oplus\uU_\theta^-$.
Let $U_\theta^+$ and $U_\theta^-$ denote the corresponding Lie subgroups. 
Then every element in a small neighborhood of the identity in 
$\GLT \ltimes (\RR\times \QQ_p)^2$
can be written uniquely as $u^+u^-$ with $u^+\in U_\theta^+$ and $u^- \in U_\theta^-$ 
close to the identity. Explicitly,
\[U^+_\theta=\set{u(z_1,z_2):z_1,z_2\in \RR},\]where
    \begin{equation}\label{eq:uz1z2def}
         u(z_1,z_2):=\pa{\left(\begin{pmatrix}
            1 & z_1\\
            0 & 1
        \end{pmatrix},I\right); \left(\colf{z_2}{0},\colf{0}{0}\right)},
    \end{equation}
    and $U^-_\theta$ is an open subgroup of 
    \begin{equation}\pa{
        \left(\begin{pmatrix}
            * & 0\\
            * & *
        \end{pmatrix},\on{SL}_2(\QQ_p)\right);\left(\colf{0}{*},\colf{*}{*}\right)}.
    \end{equation}
    
Using the continuity of the function $N$, to prove \eqref{eq:need to prove}, it suffices to prove the statement with the infimum is taken over vectors $u$ in the closure  $\overline{Ay}$. The following three lemmas   establish existence of elements of special form in $\overline{Ay}$. We will finally show that Proposition \ref{prop: dense implies zeroV2} follows from Lemma \ref{lem:Unipotent}.

\begin{lemma}\label{lem:PeriodicInClosure}
    There exists a sequence $a_n\in A$ such that $a_n (x;v)\rightarrow (\TheLattice;w)$, where 
    $w\in \Tt_p$ with $\SemiGroup \piti(w)$ finite. We may further ensure that 
    \begin{equation}\label{eq:a_n choice}
        a_n(x;v)=u_n^+u_n^-(\TheLattice;w) \textrm{ with } u_n^\pm\in U_\theta^\pm,\;\; u_n^\pm\to e,\;\; u_n^+\ne e. 
    \end{equation}
\end{lemma}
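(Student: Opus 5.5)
Since $Ax$ is dense in $X_{u_K}$ and $\TheLattice\in X_{u_K}$, I will pick $a'_n\in A$ with $a'_n x\to \TheLattice$, and then correct the fiber coordinate using the stabiliser of $\TheLattice$ and the minimal sets of Theorem \ref{thm: densityNEW}. Write $a'_n x = g_n\TheLattice$ with $g_n\to e$ in $G$, and $a'_n(x;v)=(g_n\TheLattice; g_n w_n)$ with $w_n\in\Tt_p$. By compactness of $\Tt_p$, a subsequence gives $a'_n(x;v)\to(\TheLattice;w')$ for some $w'\in\Tt_p$. So the set $\overline{Ay}$ meets the fiber over $\TheLattice$. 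Because $\theta,\zeta\in\Stab_A(\TheLattice)$ and $A$ is abelian, the set $F:=\{w\in\Tt_p : (\TheLattice;w)\in\overline{Ay}\}$ is closed and $\SemiGroup$-invariant. Here $A$ commutes with $\theta,\zeta$, so $\theta^l\zeta^k a'_n(x;v)\in Ay$ converges to $(\TheLattice;\theta^l\zeta^k w')$.

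Next I pass to $\Tpbar$. The image $\piti(F)$ is a closed nonempty $\SemiGroup$-invariant subset of the compact space $\Tpbar$. By Zorn's lemma it contains a minimal closed invariant subset, and this subset is finite by Theorem \ref{thm: densityNEW}. I choose $w\in F$ with $\piti(w)$ in this minimal set. Then $\SemiGroup\piti(w)$ is finite, and $(\TheLattice;w)\in\overline{Ay}$, so there are $a_n\in A$ with $a_n(x;v)\to(\TheLattice;w)$.

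For \eqref{eq:a_n choice}, I write $a_n(x;v)=h_n(\TheLattice;w)$ with $h_n\to e$ in $\GLT\ltimes(\RR\times\QQ_p)^2$. For large $n$ the local product decomposition gives $h_n=u_n^+u_n^-$ with $u_n^\pm\to e$, where $h_n$ may be adjusted by elements of the stabiliser of $(\TheLattice;w)$ near $e$. The remaining issue is to ensure $u_n^+\neq e$, and I expect this to be the main obstacle. Suppose $u_n^+=e$ for all large $n$. Then $a_n(x;v)\in U_\theta^-(\TheLattice;w)$, so $a_n x\in U^-_\theta\TheLattice$ lies on the weak-stable leaf of $\theta$ through $\TheLattice$, i.e. $x\in A U_\theta^-\TheLattice$. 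The quantity $\theta^{-m}$ applied to such points keeps them within a bounded distance of the compact orbit $\overline{A\TheLattice}$ of the closed periodic orbit of $\TheLattice$. This is incompatible with density of $Ax$ in the noncompact space $X_{u_K}$, since density forces $Ax$ to come arbitrarily close to points far out in the cusp, where the real component is a nondivergent but not near-periodic lattice.

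To make this rigorous, I would argue that the sequence $a_n$ can be chosen from infinitely many distinct points of $Ax$ converging to $\TheLattice$. If infinitely many of them lay on the single leaf $AU^-_\theta\TheLattice$, then $x$ itself would lie in $A U_\theta^-\TheLattice$, so $U_\theta^-$-asymptotics would make $\overline{Ax}$ contained in $\overline{A U^-_\theta \TheLattice}$. I would then check that the forward $A$-orbit closure of a point in $U^-_\theta \TheLattice$ is not all of $X_{u_K}$; the real upper unipotent direction is missing, mirroring the real case where the stable leaf of a periodic geodesic is not dense under the diagonal flow. Applying a fixed element of $U_\theta^+$ that is tiny but nonzero would be my fallback if this step fails: replace $w$ by a nearby target along $U^+_\theta$ and absorb the difference in $u_n^+$.
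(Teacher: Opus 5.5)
Your construction of $w$ is correct and is essentially the paper's. The set $F$ is closed and $\SemiGroup$-invariant. Choosing $w\in F$ with $\piti(w)$ in a minimal subset of $\piti(F)$, which is finite by Theorem~\ref{thm: densityNEW}, gives $(x_K;w)\in\overline{A(x;v)}$ with $\SemiGroup\piti(w)$ finite. The paper does the same inside $\overline{\SemiGroup w'}$.

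\textbf{The gap is the requirement $u_n^+\neq e$.} Your contradiction argument fails.
\begin{itemize}
\item[(1)] $A$ contains $a_t=(\operatorname{diag}(e^t,e^{-t}),I)$ for all $t\in\RR$, and $a_{-t}$ expands the real lower-unipotent direction of $U_\theta^-$. So lying on the weak-stable leaf of $x_K$ constrains only one half of the real flow.
\item[(2)] Concretely, the Hopf argument of Lemma~\ref{lem: mixing in X_u}, run at the base point $x_K$, shows the following. For Lebesgue-a.e.\ $s$, the point $\left(\left(\begin{smallmatrix}1&0\\ s&1\end{smallmatrix}\right),I\right)x_K\in U_\theta^-x_K$ has a dense $A$-orbit in $X_{u_K}$. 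So ``$x\in AU_\theta^-x_K$'' is fully compatible with the hypothesis.
\item[(3)] The fallback does not work either. Stabilisers in $Y$ are discrete, so the decomposition $h_n=u_n^+u_n^-$ near $e$ is unique. Inserting a fixed nonzero element of $U_\theta^+$ would make $u_n^+$ converge to that element, not to $e$.
\end{itemize}

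There is also an ordering problem. Once $w$ is fixed and the approximating sequence is arbitrary, density of $Ax$ gives you no control. The points of $Ax$ near $x_K$ that lie off the local leaf $U_\theta^-x_K$ need not have fibre coordinate near $w$.

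\textbf{The missing idea is to build the nonvanishing in at the base and then transport it.}
\begin{itemize}
\item[(a)] The local leaf $U_\theta^-x_K$ is closed with empty interior in $X_{u_K}$, since the real upper unipotent is transverse. So density of $Ax$ lets you choose $a_n'$ with $a_n'(x;v)=u(z_n,z_n')v_n(x_K;w')$, where $v_n\in U_\theta^-$, $v_n\to e$, $z_n,z_n'\to 0$, and $z_n\neq 0$. Here $z_n$ is already determined by $a_n'x$ at the level of $X$.
\item[(b)] Pick $w\in\overline{\SemiGroup w'}$ as above, and $b_m\in\SemiGroup$ with $b_mw'\to w$.
\item[(c)] Set $a_m:=b_ma'_{j_m}$, with $j_m$ so large that $b_m u(z_{j_m},z'_{j_m})v_{j_m}b_m^{-1}\to e$.
\end{itemize}
Each $b_m=\theta^l\zeta^k$ fixes $x_K$, normalises $U_\theta^\pm$, and multiplies the upper-unipotent coordinate by the nonzero factor $e^{2(lt_0+ks_0)}$. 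The correcting translation by $b_mw'-w$ does not change the $G$-component. Hence the $U_\theta^+$-part of $a_m(x;v)$ relative to $(x_K;w)$ has nonzero upper-unipotent coordinate, so in particular $u_m^+\neq e$.

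Equivalently, you can keep your set $F$ but restrict it to fibre points approached along sequences with nonzero upper-unipotent coordinate. By the same two observations and a diagonal argument, this smaller set is still closed and $\SemiGroup$-invariant. It is nonempty by step (a).
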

\begin{lemma}\label{lem:FixedInClosure}
    There exist a sequence $a_n\in A$ and $\ell\in \NN$ 
    such that $a_n(x;v)\rightarrow(\TheLattice;w)$
    where $\piti(w)$ is fixed by $\SubSemigroup$
    and \eqref{eq:a_n choice} is satisfied.
\end{lemma}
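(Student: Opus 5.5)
Lemma \ref{lem:FixedInClosure} will follow from Lemma \ref{lem:PeriodicInClosure}, after passing to a finite-index subsemigroup and applying a diagonal argument. First apply Lemma \ref{lem:PeriodicInClosure} to get $a_n\in A$ and $w\in\Tt_p$ with
\[
a_n(x;v)=u_n^+u_n^-(\TheLattice;w)\rightarrow(\TheLattice;w),
\]
where $u_n^\pm\in U_\theta^\pm$, $u_n^\pm\to e$, $u_n^+\ne e$, and the orbit $\SemiGroup\piti(w)$ is finite.

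Next we find $\ell$. Since $\SemiGroup\piti(w)$ is finite and $\theta$ acts invertibly on $\Tpbar$, the map $\theta$ permutes a finite subset containing $\piti(w)$. Hence $\theta^{\ell_1}\piti(w)=\piti(w)$ for some $\ell_1\ge1$. The points $\zeta^k\piti(w)$, $k\in\NN$, lie in a finite set, so $\zeta^{k_0+m}\piti(w)=\zeta^{k_0}\piti(w)$ for some $k_0\in\NN$ and $m\ge1$. Set $\ell:=\ell_1 m$ and $\bar w':=\zeta^{\ell k_0}\piti(w)$. Then $\zeta^{\ell}\bar w'=\bar w'$. Since $\theta$ and $\zeta$ commute, also $\theta^{\ell}\bar w'=\zeta^{\ell k_0}\theta^{\ell}\piti(w)=\bar w'$. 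Thus $\bar w'$ is fixed by $\SubSemigroup$. We choose the lift $w':=\zeta^{\ell k_0}w\in\Tt_p$, which satisfies $\piti(w')=\bar w'$.

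It remains to transport the convergence by $\zeta^{\ell k_0}$, which fixes $\TheLattice$ by Lemma \ref{lem: NEW two stabilizers}. Multiplying by it gives
\[
\zeta^{\ell k_0}a_n(x;v)=(\zeta^{\ell k_0}u_n^+\zeta^{-\ell k_0})(\zeta^{\ell k_0}u_n^-\zeta^{-\ell k_0})(\TheLattice;w').
\]
Here $\zeta^{\ell k_0}a_n\in A$ because $A$ is a commutative semigroup containing $\zeta$. Conjugation by the fixed element $\zeta^{\ell k_0}$ is a continuous automorphism, so the conjugates still tend to $e$, and the first conjugate is still $\ne e$.

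The one point to check is that conjugation by $\zeta$ preserves $U_\theta^\pm$. This holds because $\zeta$ and $\theta$ are commuting diagonal elements, so $\on{Ad}_\zeta$ preserves each eigenspace of $\on{Ad}_\theta$. One can also see it directly from the explicit forms. For $U_\theta^+$, conjugating $u(z_1,z_2)$ by a diagonal element only rescales $z_1$ and $z_2$. For $U_\theta^-$, the larger group containing it (lower-triangular real part, full $p$-adic part) is preserved. The only subtlety is that $U_\theta^-$ is merely an open subgroup of that larger group; this is harmless because the conjugated elements tend to $e$ and so eventually lie in any open subgroup. Alternatively one can define $U_\theta^-$ canonically as the group generated by the eigenspaces, which makes invariance immediate.

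This gives \eqref{eq:a_n choice} for the new sequence $\zeta^{\ell k_0}a_n$ and the point $(\TheLattice;w')$, proving the lemma. I expect the main obstacle to be this last invariance check: that conjugation by $\zeta$ keeps $u_n^+$ in $U_\theta^+$ and $u_n^-$ in the chosen open subgroup, so that the factorization keeps exactly the required form.
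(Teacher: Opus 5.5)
Your proof is correct and follows the paper's argument. You start from Lemma \ref{lem:PeriodicInClosure}, apply a power of $\zeta$ (which fixes $\TheLattice$ and normalizes $U_\theta^\pm$) to move onto a $\zeta$-periodic point, obtain the $\theta$- and $\zeta$-periods by pigeonhole, and combine them into a common $\ell$. The differences are cosmetic: you find the $\theta$-period before shifting and use commutativity, you take a product rather than an lcm, and you spell out the invariance of $U_\theta^\pm$ under conjugation, which the paper calls straightforward.
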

\begin{lemma}\label{lem:Unipotent}
    There exist $\ell\in \NN$, $w\in\Tt_p$ such that $\piti(w)$ is fixed by $\SubSemigroup$, and $0\neq z$ such that either $u(z,0)(\TheLattice;w)\in \Closure$ or  $u(0,z)(\TheLattice;w)\in \Closure$.
\end{lemma}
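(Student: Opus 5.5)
The plan is to take the output of Lemma~\ref{lem:FixedInClosure} and run the standard "polynomial divergence along the expanding horospherical direction" argument, with $\theta^\ell$ (or a suitable element of $\SubSemigroup$) as the expanding map. By Lemma~\ref{lem:FixedInClosure} we have $a_n\in A$ and $\ell\in\NN$ with
\[
a_n(x;v)=u_n^+u_n^-(\TheLattice;w),\qquad u_n^\pm\in U_\theta^\pm,\; u_n^\pm\to e,\; u_n^+\neq e,
\]
and $\piti(w)$ fixed by $\SubSemigroup$. Write $u_n^+=u(z_1^{(n)},z_2^{(n)})$ with $(z_1^{(n)},z_2^{(n)})\neq(0,0)$, tending to $0$.

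First, conjugation by $\theta$ acts on $U_\theta^+$ diagonally: $\theta\, u(z_1,z_2)\,\theta^{-1}=u(e^{2t_0}z_1,e^{t_0}z_2)$, since the $(1,2)$ entry of $\SL_2(\RR)$ is scaled by $e^{2t_0}$ and the first real coordinate of the translation part by $e^{t_0}$. Passing to a subsequence, either $z_1^{(n)}\neq0$ for all $n$, or $z_1^{(n)}=0$ and $z_2^{(n)}\neq 0$ for all $n$. In the first case I choose $k_n\in\NN$ with $|e^{2\ell k_n t_0}z_1^{(n)}|\in[1,e^{2\ell t_0}]$. Then $\theta^{\ell k_n}u_n^+\theta^{-\ell k_n}=u(z_1',z_2')$ with $|z_1'|$ bounded away from $0$ and $\infty$ and $|z_2'|=e^{\ell k_n t_0}|z_2^{(n)}|\le (e^{2\ell t_0}|z_1^{(n)}|^{-1})^{1/2}|z_2^{(n)}|$. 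This last quantity may not tend to $0$. To handle it, I will first reduce, as in the proof of Claim~\ref{cl: oneCoordinatePt}, to the case where the ratio $|z_2^{(n)}|^2/|z_1^{(n)}|$ tends to $0$ or to $\infty$, using the subsequence choice. In the opposite regime I instead normalize $z_2$ to size $1$, which forces $z_1'\to0$. Either way the limit has exactly one nonzero coordinate, giving $u(z,0)$ or $u(0,z)$ with $z\neq0$. If the ratio stays bounded, the limit is $u(z_1,z_2)$ with both coordinates nonzero. Since the lemma only asks for $z\neq 0$ in one slot, I would try to avoid this case by choosing the normalization along $z_2$ when $z_2^{(n)}\ne0$ and using the quadratic-versus-linear scaling to kill $z_1$. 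Concretely, normalizing $|e^{\ell k t_0}z_2^{(n)}|\asymp1$ gives $|z_1'|\asymp |z_1^{(n)}|/|z_2^{(n)}|^2$, so one of the two normalizations always kills the other coordinate, except when $|z_1^{(n)}|\asymp |z_2^{(n)}|^2$. In that remaining case I would accept the limit $u(z_1,z_2)$ and then use the semigroup action once more to separate the coordinates, as described below.

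Second, I control the other factors. We have $\theta^{\ell k_n}(\TheLattice;w)=(\TheLattice;\theta^{\ell k_n}w)$, and $\piti(\theta^{\ell k_n}w)=\piti(w)$ by the fixedness. So $\theta^{\ell k_n}w=w+\xi_n$ with $\xi_n\in V_p=\pi(\ZZ_p\e_3)$. Conjugating $u_n^-$ by $\theta^{\ell k_n}$ keeps it bounded: $U_\theta^-$ is weakly contracted, and the neutral directions (the diagonal, and the $\e_3$ and $\e_4$ translation parts with $p$-adic unit scalings $a^{\pm1}$) stay in a compact set. I pass to a further subsequence so that $\theta^{\ell k_n}u_n^-\theta^{-\ell k_n}\to e$, which holds since $u_n^-\to e$ and the conjugation is non-expanding, and so that $\xi_n\to\xi\in V_p$. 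Then
\[
\theta^{\ell k_n}a_n(x;v)\to u(z_1,z_2)(\TheLattice;w+\xi),
\]
with $\piti(w+\xi)=\piti(w)$ still fixed by $\SubSemigroup$. Since $\theta^{\ell k_n}a_n\in A$, the limit lies in $\Closure$, and replacing $w$ by $w+\xi$ gives the statement.

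The main obstacle is the balanced case $|z_1^{(n)}|\asymp|z_2^{(n)}|^2$, together with ensuring that the conjugated $u_n^-$ really converges to $e$ rather than to a nontrivial neutral element. For the latter, the neutral part of $U_\theta^-$ includes unipotents in the $\SL_2(\QQ_p)$ factor that $\theta$ can expand $p$-adically ($|a^{\pm2}|_p=1$, so they are only neutral). I would handle this by noting that the convergence of the conjugates is automatic for $\mathrm{Ad}$-eigenvalues of absolute value at most $1$, so the bounded limit is just $e$. For the balanced case I would pass from $u(z_1,z_2)(\TheLattice;w)\in\Closure$ to a pure element by applying the identity
\[
u(z_1,z_2)=u(0,z_2)\,u(z_1,0)
\]
and reusing the fixed translate. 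Alternatively I would replace $\theta$ by an element of $\SubSemigroup$ whose real part is $\diag(e^{s},e^{-s})$ with $s$ chosen via the multiplicative independence in Lemma~\ref{lem: NEW two stabilizers}, i.e.\ the pair $\{e^{2s_0},e^{2t_0}\}$. This changes the relative scaling of $z_1$ and $z_2$ by the factor $p^{-n_0k}$ in the real part of $\zeta$, which breaks the balance. This is what I would try first.
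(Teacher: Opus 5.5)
Your proposal is correct and follows the paper's proof. You normalize $u_n^+$ by $\theta^{\ell k_n}$, which is non-expanding on $U_\theta^-$, to get $u(z,z')(\TheLattice;\tilde w)\in\Closure$ with $\piti(\tilde w)$ still fixed; if both coordinates survive, you act on this limit point by $\theta^{\ell m}\zeta^{\ell n}$, whose real scalar factor $p^{-\ClassNumberNorm\ell n}$ shrinks the translation coordinate while the $\theta$-power keeps $e^{2\ell(mt_0+ns_0)}z$ bounded, yielding $u(z,0)$. The case split on $|z_2^{(n)}|^2/|z_1^{(n)}|$ is unnecessary, since this $\zeta$-step handles every limit with $z\neq0$ uniformly. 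You should also drop the alternative via $u(z_1,z_2)=u(0,z_2)u(z_1,0)$: it only rewrites the point as $u(z_1,0)(\TheLattice;w+\pi(z_2\e_1))$, and the new translate is in general no longer fixed by $\SubSemigroup$ modulo $V_p$.
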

We now proceed to prove the above lemmas and eventually conclude the proof of Proposition \ref{prop: dense implies zeroV2}.
\begin{proof}[Proof of Lemma \ref{lem:PeriodicInClosure}]
   Using the density of $\pdiag x$ in $X_{u_K}$,
    we may choose a sequence $a_n'\in \pdiag$ such that,  
    $a_n'(x;v)\rightarrow (\TheLattice;w')$ in $Y$ for some $w'\in \Tt_p$. We may  write $a_n'(x;v)=u(z_n,z_n')v_n(\TheLattice;w')$, where 
    $z_n,z_n'\to 0$ and $v_n\in U_\theta^-$, $v_n\to e$.
    Furthermore, using the density of $\pdiag x$ in $X_{u_K}$, we may ensure that $z_n\neq 0$.
        
    We choose $w\in \overline {\SemiGroup w'}$ such that  $\overline {\SemiGroup \piti(w)}$ is a minimal closed $\SemiGroup$-invariant subset in $\Tpbar$. Then 
    there exists a sequence $b_n\in \SemiGroup$ such that $b_n (\TheLattice;w')=(\TheLattice;w+w_n)$ with $w_n\rightarrow 0$. We take a subsequence $j_n$ such  that 
    $$
    b_n u(z_{j_n},z_{j_n}')v_{j_n}b_n^{-1} \to e.
    $$
    Then
$$b_na_{j_n}'(x;v)=b_n u(z_{j_n},z_{j_n}')v_{j_n}b_n^{-1} (\TheLattice;w+w_n)\rightarrow (\TheLattice;w),$$ and the sequence $a_n=b_na_{j_n}'$ satisfies \eqref{eq:a_n choice}. Here we crucially use that $z_{j_n}$ is always non-zero and that conjugation by any element of $A$ preserves $U_\theta^{\pm}$, and amounts to multiplying $z_{j_n}$ by a non-zero number. The orbit $\SemiGroup \piti(w)$ is finite 
by Theorem \ref{thm: densityNEW}.
\end{proof}

\begin{proof}[Proof of Lemma \ref{lem:FixedInClosure}]
Let $a_n', w'$ satisfy Lemma \ref{lem:PeriodicInClosure}. As  $\SemiGroup \piti(w')\subset \Tpbar$ is finite, by Pigeonhole Principle, there exist $i<j\in \NN$ with $\zeta^i \piti(w')= \zeta^j\piti(w')$. Define $w=\zeta^i w'$ and note that $\zeta^i a_n'(x;v)\rightarrow(\TheLattice;w)$ as $n\rightarrow\infty$.  Consider $H:=\set{g\in \SemiGroup:\piti(gw)=\piti( w)}$ and note that we have just shown that $\zeta^{j-i}\in H$. As before, there exist $m<n\in \NN$ with $\theta^m \piti(w)= \theta^n \piti(w)$. By the claim above applied to $\theta^{-m}$ we have that $\piti(w)=\piti( \theta^{n-m}w)$, so $\theta^{n-m}\in H$. Setting $a_n=\zeta^ia_n'$ and $\ell={\rm lcm}\pa{j-i,n-m}$ we see that $\piti(w)$ is fixed by $\SubSemigroup$, as required. Moreover, it is straightforward to see that \eqref{eq:a_n choice} holds again.
\end{proof}
\begin{proof}[Proof of Lemma \ref{lem:Unipotent}]
Set $y=(\TheLattice;w')$ where $w'$ satisfies the statement of Lemma \ref{lem:FixedInClosure}.
    Let $y_n= a_n(x;v)$ for $a_n\in A$ such that $y_n\rightarrow y$ as $n\rightarrow \infty$. 
	For $n$ large enough, we can write
$y_n=u_n^+u_n^- y$,
	with $u_n^+\in U^+_{\theta}$ and $u_n^- \in U^-_\theta$ such that $u_n^+,u_n^- \rightarrow e$ and $u_n^+\ne e$. Recall that the action by conjugation by $\theta$ is uniformly expanding on $U_\theta^+$ and neutral/contracting on $U_\theta^-$, so that  
	we can choose $k_n\in \NN$ minimal such that the maximal coordinate of $\theta^{\ell k_n}u_n^+\theta^{-\ell k_n}$ is in the interval $[2,M]$ with $M>2$ is a fixed constant depending only on the expansion rate of $\theta^\ell$ on $U^+_\theta$. Perhaps redefining $u_n^+$ and $k_n$ to be appropriate subsequences,  without loss of generality, we may assume that 
    $\theta^{\ell k_n}u_n^+ \theta^{-\ell k_n}\rightarrow u^+=u(z,z')$, where $2\leq \max\{\av{z},\av{z'}\}\leq M$. Also $\theta^{\ell k_n}u_n^- \theta^{-\ell k_n}\rightarrow e$. 

As $\Tt_p$ is compact, by passing to a subsequence and redefining $k_n$, we may assume that $\theta^{\ell k_n}w'\to \tilde w$ as $n\to \infty$ for some $\tilde{w}$. Note that $\piti(\tilde w)$ must also be $\SubSemigroup$-fixed in $\Tpbar$. Putting everything together, we get the convergence	\begin{equation}
		\theta^{\ell k_n}y_n=\theta^{\ell k_n}u_n^+\theta^{-\ell k_n}\theta^{\ell k_n}u_n^- \theta^{-\ell k_n}(\theta^{\ell k_n}y)\rightarrow u^+(\TheLattice;\tilde w).
	\end{equation}
	  Recall that $u^+=u(z,z')$, so we have $u(z,z')(\TheLattice;\tilde w)\in \Closure$. If $z=0$, the lemma is proved with $u(0,z')$ and  with $w=\tilde w$. Assume now that $z\neq 0$.  One may directly calculate for any $m,n\in \NN$ that
\begin{equation}
    \theta^{\ell m}\zeta^{\ell n} u(z,z') (\theta^{\ell m}\zeta^{\ell n})^{-1}=u(e^{2\ell mt_0+2\ell ns_0}z,e^{\ell mt_0+\ell ns_0}p^{-\ClassNumberNorm\ell n}z').
\end{equation}
Using the multiplicative independence of $e^{2s_0}$ and $e^{2t_0}$ established in Lemma \ref{lem: NEW two stabilizers}, we find a subsequence of integers $|m_{k}|,n_{k}\rightarrow\infty$ such that $m_{k}t_0+n_{k}s_0\rightarrow 0$ as $k\rightarrow \infty$. Consequently,
\[(\theta^{\ell m_{k}}\zeta^{\ell n_{k}})u(z,z')(\theta^{\ell m_{k}}\zeta^{\ell n_{k}})^{-1}\to u(z,0).\]
As before, we redefine $m_{k},n_{k}$ to be subsequences with $\theta^{\ell m_{k}}\zeta^{\ell n_{k}}\tilde w$ converging to $w\in \Tt_p$ and note that $\piti(w)$ is also $\SubSemigroup$-fixed. We have the convergence
\begin{align*}
\theta^{\ell m_{k}}\zeta^{\ell n_{k}}u(z,z')(\TheLattice;\tilde w)&=\\    
 \theta^{\ell m_{k}}\zeta^{\ell n_{k}}u(z,z')\pa{\theta^{\ell m_{k}}\zeta^{\ell n_{k}}}^{-1}\theta^{\ell m_{k}}\zeta^{\ell n_{k}}(\TheLattice;\tilde w)&\to u(z,0)(\TheLattice;w)\in \Closure,
\end{align*}
as required.
\end{proof}
\begin{proof}[Proof of Proposition \ref{prop: dense implies zeroV2}]
To prove Proposition \ref{prop: dense implies zeroV2}, using continuity of the function $N$, it is enough to show the existence of vectors $u$ in grids belonging to the whole orbit $\Closure$ with $N(u)$ arbitrarily small but non-zero. Note that for any $w_1,w_2\in\OurFunDo$ such that $\pibar(w_1)=\pibar(w_2)$, then $N(w_1)=N(w_2)$. Let $\ell\in \NN$ and $\SubSemigroup$-fixed $\piti(w)\in \Tpbar$ be satisfying the statement of Lemma \ref{lem:Unipotent}.

First consider the case  $u(z,0)(\TheLattice;w)\in \Closure$ with $z\neq 0$. We claim that in this case, the proposition would follow from proving the following statement: There exist $x_1,x_2\in \RR$ and $x_4\in \ZZ_p$ with $x_1(zx_2)<0$ and $x_4\neq 0$ such that for every $m_1\in \ZZ,m_2\in \NN$, there exists $x_3\in \ZZ_p$ possibly depending on $m_1$ and  $m_2$, such that the vector
\begin{equation}
    \label{eq:SmallVecInClosure}
\pa{ \binom{x_1+e^{2\ell m_1t_0+2\ell m_2s_0}z x_2}{x_2}, \binom{x_3}{x_4}}\in(\theta^{\ell m_1}\zeta^{\ell m_2}) u(z,0)(\TheLattice;w)\in \Closure.
\end{equation}
Indeed, if such vectors exist, we have 
$$
N\pa{ \binom{x_1+e^{2\ell m_1t_0+2\ell m_2s_0}z x_2}{x_2}, \binom{x_3}{x_4}}=\av{x_1+e^{2\ell m_1t_0+2\ell m_2s_0}z x_2}\av{x_2}\av{x_4}_p.
$$
Since $x_2$ and $x_4$ do not depend on $m_1$ and $m_2$, it is enough to show that $\av{x_1+e^{2\ell m_1t_0+2\ell m_2s_0}z x_2}$ can be arbitrarily small. To prove this, we invoke Lemma \ref{lem: NEW two stabilizers} which asserts that $e^{2 t_0}$ and $e^{2 s_0}$ are multiplicatively independent. As a result, \begin{equation}\label{eq:multindep}
   \overline{\{e^{2\ell m_1t_0+2\ell m_2s_0}:m_1\in\ZZ,m_2\geq 0\}}=\RR_{>0}.\end{equation}
   Since $x_1$ and $zx_2$ have opposite signs, the proposition follows once we show \eqref{eq:SmallVecInClosure}.

To this end, note first that for $m_1\in \ZZ$ and $m_2\in \NN$,
   \begin{equation}
      (\theta^{\ell m_1}\zeta^{\ell m_2}) u(z,0)(\TheLattice;w)= u(e^{2\ell m_1t_0+2\ell m_2s_0}z,0)(\theta^{\ell m_1}\zeta^{\ell m_2})(\TheLattice;w)\in \overline{A(x;v)}.
   \end{equation}
A quick calculation shows that it is enough to find $x_1,x_2\in \RR,\,x_4\in \ZZ_p$, with $x_1(zx_2)<0$ and $x_4\neq 0$ such that 
$\big( \binom{x_1}{x_2}, \binom{x_3}{x_4} \big)\in (\theta^{\ell m_1}\zeta^{\ell m_2})(\TheLattice;w)$ 
for some $x_3\in \ZZ_p$. But since $\piti(w)$ is $\SubSemigroup$-fixed, each vector $v\in (\theta^{\ell m_1}\zeta^{\ell m_2})(\TheLattice;w)$ is of the form $v=v_1+\big( \binom{0}{0}, \binom{u}{0} \big)$ where $v_1\in (\TheLattice;w)$ and $u\in\ZZ_p$. It follows that as $x_3$ is allowed to depend on $m_1,m_2$, it is enough to find a vector $\big( \binom{x_1}{x_2}, \binom{x_3'}{x_4} \big)\in (\TheLattice;w)$ with $x_1,x_2\in\RR,x_3',x_4\in\ZZ_p$ satisfying $x_1(zx_2)<0,x_4\neq 0$. Let  $w_0\in (\TheLattice;w)\cap \Dd_0$ be a representative. If $w_0$ itself does not satisfy all the above requirements, let $\bar x_K$ denote the corresponding real lattice. We therefore find $\binom{\theta_1}{\theta_2}\in \bar x_K$ such that the condition $\theta_1(z\theta_2)<0$ holds. We lift it to a vector $\Theta=\big( \binom {\theta_1}{\theta_2}, \binom{\theta_3}{\theta_4} \big)\in \TheLattice$ with $\theta_4\neq 0$. Finally, we choose $n\in \NN$ large enough such that the vector
$$
\pa{ \binom{x_1}{x_2}, \binom{x_3'}{x_4} }:=w_0+p^n\Theta\in (\TheLattice;w)
$$
has $x_1(zx_2)<0$, $x_3',x_4\in \ZZ_p$ and $x_4\neq 0$.
This establishes the proposition in the first case.

Now we deal with the case when  $\piti(w)\in \Tpbar$ is fixed by
$\SubSemigroup$, $u(0,z)(\TheLattice;w)\in \Closure$ and $z\neq 0$. Analogous calculation as in the previous case shows that the grid $\theta^{\ell m_1}\zeta^{\ell m_2}u(0,z)(\TheLattice;w)$ contains vectors of type
\begin{equation}\label{eq:vec small 2}
       \pa{\colf{x_1+e^{\ell m_1t_0+\ell m_2s_0}p^{-n_0\ell m_2}z}{x_2},\colf{x_3}{x_4}}
   \end{equation}
with $x\in \theta^{\ell m_1}\zeta^{\ell m_2}(\TheLattice;w)$.   
Using that $e^{\ell t_0}$ and $e^{\ell s_0}p^{-n_0\ell}$ are multiplicatively independent by Lemma \ref{lem: NEW two stabilizers},  following our argument in the previous case, it is enough to show that $(\TheLattice;w)$ contains a vector $\pa{ \binom{x_1}{x_2}, \binom{x_3'}{x_4}}$ with $x_1 z<0$, $x_4\neq 0$ and $x_3',x_4\in\ZZ_p$. The existence of such a vector follows from a minor modification of our earlier argument.
\end{proof}

The unboundedness of the semigroup $\pdiag$ gives the following.
\begin{lemma}\label{lem: mixing in X_u}
    For Lebesgue almost every $v\in \RR$, any $u\in \ZZ_p^\times$, the $\pdiag$-orbit of the coset 
    \begin{equation}
		\left(\begin{pmatrix}
			1 & 0\\
			v & 1
		\end{pmatrix},\begin{pmatrix}
			u & 0\\
			0 & 1
		\end{pmatrix}\right)\GLZP 
	\end{equation}
    is dense in $X_u$.
\end{lemma}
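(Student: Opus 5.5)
We will deduce the statement from ergodicity of a one-parameter subgroup of $\pdiag$ on $X_u$, combined with a standard Fubini argument along the expanding horospherical direction.

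\emph{Step 1: reduction to the real diagonal flow.} Let $a_t:=\left(\diag(e^t,e^{-t}),I\right)\in\pdiag$ (take $n=0$, $a=1$ in \eqref{eq:A}). Since $\{a_t\}\subset\pdiag$, it suffices to show that for almost every $v$ the $\{a_t\}_{t\ge0}$-orbit (or even a single sequence $a_{t_k}$) of $x_{v,u}:=\left(\left(\begin{smallmatrix}1&0\\ v&1\end{smallmatrix}\right),\diag(u,1)\right)\Gamma$ is dense in $X_u$. Write $X_u\cong \SL_2(\RR\times\QQ_p)/\Gamma_u$ with its finite Haar probability measure $\mu_u$.

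\emph{Step 2: mixing.} The group $\SL_2(\RR)\times\SL_2(\QQ_p)$ acts on $X_u$, and $\Gamma_u$ is an irreducible lattice (it is commensurable to a conjugate of $\SL_2(\zloverp)$, which is dense-projecting in each factor by strong approximation). By the Howe--Moore theorem applied to the unitary representation on $L^2_0(X_u,\mu_u)$ (which has no vectors invariant under $\SL_2(\RR)$, again by irreducibility/strong approximation), the flow $a_t$ is mixing on $X_u$.

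\emph{Step 3: equidistribution of expanding horocycle pieces and Fubini.} The lower unipotent $\left(\begin{smallmatrix}1&0\\ v&1\end{smallmatrix}\right)$ is expanded by $a_{-t}$; to match the direction we instead use $a_{-t}$, $t\to+\infty$, noting $a_{-t}$ also lies in $\pdiag$ since $t\in\RR$ is arbitrary. The standard Margulis thickening argument: for a small box $W=U^-_\eta U^0_\eta$ of the complementary (contracted plus neutral) directions, the sets $a_{-t}W\,\left(\begin{smallmatrix}1&0\\ [v_0,v_0+1]&1\end{smallmatrix}\right)x_{0,u}$ equidistribute by mixing, and since $a_{-t}Wa_t$ shrinks to the identity, the pushed horocycle segments $\{a_{-t}x_{v,u}: v\in J\}$ equidistribute in $X_u$ for every interval $J$. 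Then, taking a countable basis $\{f_j\}$ of $C_c(X_u)$ and a sequence $t_k\to\infty$ growing fast enough (e.g.\ $t_k=k$ suffices with an effective rate, otherwise pass to a lacunary subsequence and use a Borel--Cantelli/variance estimate, which follows from mixing of pairs), we get for a.e.\ $v$ that $\frac1K\sum_{k\le K}f_j(a_{-t_k}x_{v,u})\to\int f_j\,d\mu_u$ for all $j$. This implies density of the orbit of $x_{v,u}$ for a.e.\ $v$. Alternatively, and more simply, we apply Birkhoff's ergodic theorem to $a_{-1}$ on $X_u$: a.e.\ point has dense forward orbit; since the set of such points is invariant under the contracted and neutral directions (the stable manifold argument: points in the same stable leaf have asymptotic orbits), Fubini in local coordinates $U^+\times U^-\times U^0$ shows that for a.e.\ $v$ the point $x_{v,u}$ has dense forward $a_{-t}$-orbit.

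\emph{Uniformity in $u$.} The main obstacle is the quantifier ``any $u$'' with a single null set: for fixed $u$ the above gives a null set $E_u$, but there are uncountably many $u$. We handle this by noting $x_{v,u}=(I,\diag(u,1))\,x_{v,1}$ and $(I,\diag(u,1))$ commutes with $a_t$ and maps $X_1$ homeomorphically onto $X_u$ (translation by an element of $G$ normalizing the relevant structure); hence density of $\{a_t x_{v,1}\}$ in $X_1$ implies density of $\{a_tx_{v,u}\}$ in $X_u$, so $E_u=E_1$ for all $u$.
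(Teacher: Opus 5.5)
Your proof is correct, and its second route (the one you call ``alternatively, and more simply'') is essentially the paper's argument: you establish ergodicity of the real diagonal flow $a_t$ on $X_u$ (the paper cites Mautner; you use Howe--Moore together with density of the projections of $\Gamma_u$), and then run a Hopf/Fubini argument along the lower unipotent leaf, which $a_{-t}$ expands. The thickening/Borel--Cantelli route you sketch first is unnecessary, and it is loosely stated, since $a_{-t}Wa_t$ does not shrink in the neutral directions; your observation that $x_{v,u}=(I,\diag(u,1))\,x_{v,1}$, with $(I,\diag(u,1))$ commuting with $A$, usefully makes explicit that the exceptional null set can be taken independent of $u$, a point the paper's proof leaves implicit.
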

\begin{proof}
For any $u\in \ZZ_p^\times$, the one-parameter subgroup $\{a_t:t\in\RR\}\subset A$, where
$$
a_t:=\left(\begin{pmatrix}e^t&0\\0&e^{-t}\end{pmatrix},I_2\right)\in A,
$$
preserves $X_u$. The space $X_u$ is a quotient of $\SL_2(\RR\times\QQ_p)$ by a lattice, and it carries the induced $\SL_2(\RR\times\QQ_p)$-invariant probability measure $\mu_u$.

Fix $s\in\RR\setminus\{0\}$. The stable horospherical subgroup of $a_s$ is
$$
G_{a_s}^-:=\left\{g\in \SL_2(\RR\times\QQ_p): a_s^n g a_s^{-n}\stackrel{n\to+\infty}{\longrightarrow} e \right\}
=
\left\{\left(\begin{pmatrix}1&0\\ v&1\end{pmatrix},I_2\right)\colon v\in\RR\right\}.
$$
Since $\{a_t:t\in\RR\}$ is unbounded, by Mautner's property (see \cite[\S 2.1]{margulis1996measure} for the $S$-arithmetic setting), $a_s$ acts ergodically on $X_u$.
By the standard Hopf argument, for Lebesgue almost every point on the horospherical leaf
$$
\left\{\left(\begin{pmatrix}1&0\\ v&1\end{pmatrix},\begin{pmatrix}u&0\\0&1\end{pmatrix}\right)\GLZZP  : v\in\RR\right\},
$$
the backwards $a_s$-orbit is equidistributed in $X_u$. Since $a_s\in A$, the $A$-orbit of each such point is also (in particular) dense in $X_u$.
\end{proof}

\begin{proof}[Proof of Theorem \ref{thm: main} (the generic case)]
	By Lemma \ref{lem: mixing in X_u}, for almost every $\alpha\in \RR$ and for every $u\in \ZZ_p^\times$, the lattice
	\begin{equation}\label{eq:Lgammadef}
		\Delta_{\alpha,u}:=\left(\begin{pmatrix}
			1 & 0\\
			1/\alpha & 1
		\end{pmatrix},\begin{pmatrix}
			u & 0\\
			0 & 1
		\end{pmatrix}\right)\GLZZP 
	\end{equation}
	has a dense $\pdiag$-orbit in $X_u$.
    In particular, for $u=u_K$ as in \eqref{eq: u def}, we have that $\pdiag\Delta_{\alpha,u_K}$ is dense in $X_{u_K}$.
	Proposition \ref{prop: dense implies zeroV2} implies that for every $v\in \RR^2\times \QQ_p^2$:
	\[\inf\set{N(u)\neq 0:u=\cvmatr{u_1}{u_2}{u_3}{u_4}\in A(\Delta_{\alpha,u_K};v),  u_4\in\ZZ_p}=0.\]
    In particular, this holds for $v=\left(\colf{0}{\delta/\alpha},\colf{0}{\kappa}\right)$,
so that the conditions of Corollary \ref{lem: dani} hold for $\overline{v}:=(1/\alpha,\delta/\alpha,\kappa)\in\RR^2\times\ZZ_p$.
We conclude that
$$
\liminf_{\av{q}\rightarrow\infty}|q|\inn{q\alpha+\delta}\abs{q+\kappa}_p=0
$$
for almost every $\alpha$ and all $\delta,\kappa$, as needed.

\end{proof}

\section{Proof of the main theorem: quadratic irrational case}\label{sec: quad}

Let $\alpha\in (0,1)$ be a quadratic irrational. Throughout, let $\Delta_{\alpha,u_K}$ be as in \eqref{eq:Lgammadef}, where $u_K$ is defined in \eqref{eq: u def}.
Arguing as in the previous section, 
in order to prove Theorem \ref{thm: main},
it would be enough to show that $\pdiag\Delta_{\alpha,u_K}$ is dense in $X_{u_K}$. We first want to reduce to a density statement in $X_1$ instead of $X_{u_K}$.

\begin{lemma}\label{lem:Xu-to-X1}
For every $u\in \ZZ_p^\times$, left translation by
$
d_u:=(I,\diag(1,u))
$
induces an $\pdiag$-equivariant homeomorphism
$
\iota_u:X_1\longrightarrow X_u.
$
In particular, using the notation \eqref{eq:Lgammadef}, we have that $\pdiag\Delta_{\alpha,u}$ is dense in $X_u$  if and only if $\pdiag\Delta_{\alpha,1}$ is dense in $X_1$. 
\end{lemma}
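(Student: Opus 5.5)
Left translation by $d_u$ defines a homeomorphism of $X=G/\Gamma$, since $d_u\in G$: we have $\det(I,\diag(1,u))=|1|\cdot|u|_p=1$. Its inverse is translation by $d_u^{-1}=d_{u^{-1}}$. So the plan has three parts: show that $d_u$ maps $X_1$ onto $X_u$, show that it commutes with $\pdiag$, and then read off the density statement from the explicit form of $\Delta_{\alpha,u}$.

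\emph{Step 1: $d_uX_1=X_u$.} The key observation is that $d_u$ normalizes $\on{SL}_2(\RR\times\QQ_p)$, because conjugation preserves determinants. Hence
$$d_uX_1=d_u\on{SL}_2(\RR\times\QQ_p)\Gamma=\on{SL}_2(\RR\times\QQ_p)\,d_u\Gamma=\on{SL}_2(\RR\times\QQ_p)(I,\diag(1,u))\Gamma=X_u.$$
The restriction $\iota_u:=d_u|_{X_1}$ is therefore a continuous bijection $X_1\to X_u$, with continuous inverse given by translation by $d_{u^{-1}}$.

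\emph{Step 2: $\pdiag$-equivariance.} Every element of $\pdiag$ is a pair of diagonal matrices, and so is $d_u$. Diagonal matrices commute, so $a d_u=d_u a$ for all $a\in\pdiag$. This gives $\iota_u(ax)=a\,\iota_u(x)$.

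\emph{Step 3: the density statement.} By equivariance, $\iota_u(\pdiag\Delta_{\alpha,1})=\pdiag\,\iota_u(\Delta_{\alpha,1})$. A homeomorphism carries dense subsets of $X_1$ exactly to dense subsets of $X_u$, so it suffices to identify $\iota_u(\Delta_{\alpha,1})$ with $\Delta_{\alpha,u}$. Here
$$\iota_u(\Delta_{\alpha,1})=\Big(\begin{pmatrix}1&0\\1/\alpha&1\end{pmatrix},\diag(1,u)\Big)\Gamma,$$
whereas $\Delta_{\alpha,u}$ has $p$-adic component $\diag(u,1)$. So the two points differ by swapping the position of $u$ in the $p$-adic diagonal matrix.

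This mismatch is the only real obstacle. I would handle it by writing
$$\diag(1,u)=\diag(u^{-1},u)\cdot\diag(u,1),$$
so that
$$\iota_u(\Delta_{\alpha,1})=\big(I,\diag(u^{-1},u)\big)\,\Delta_{\alpha,u}.$$
The element $(I,\diag(u^{-1},u))$ is exactly the element of $\pdiag$ with $t=0$, $n=0$ and $a=u^{-1}\in\ZZ_p^\times$. Therefore $\pdiag\,\iota_u(\Delta_{\alpha,1})=\pdiag\Delta_{\alpha,u}$, and we conclude that $\pdiag\Delta_{\alpha,u}$ is dense in $X_u$ if and only if $\pdiag\Delta_{\alpha,1}$ is dense in $X_1$.
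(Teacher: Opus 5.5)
Your proof is correct and follows the same argument as the paper: translation by $d_u$, which normalizes $\SL_2(\RR\times\QQ_p)$ and commutes with the diagonal semigroup $\pdiag$. Your identity $\iota_u(\Delta_{\alpha,1})=(I,\diag(u^{-1},u))\,\Delta_{\alpha,u}$, together with the fact that $(I,\diag(u^{-1},u))$ and its inverse both lie in $\pdiag$ (so $\pdiag\,(I,\diag(u^{-1},u))=\pdiag$), justifies the equality $\iota_u(\pdiag\Delta_{\alpha,1})=\pdiag\Delta_{\alpha,u}$ that the paper asserts without comment.
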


\begin{proof}
By definition,
$$
X_u=(I,\diag(1,u))\SL_2(\RR\times\QQ_p)\GLZP 
=d_uX_1.
$$
Hence left translation by $d_u$ gives a homeomorphism $X_1\to X_u$. Since $\pdiag$ commutes with $d_u$, this map is $\pdiag$-equivariant. Since $\iota_u(\pdiag\Delta_{\alpha,1})=\pdiag\Delta_{\alpha,u}$ the lemma follows.
\end{proof}

Let $P=\overline{\pdiag\Delta_{\alpha,1}}$ denote the closure in $X_1$. Using Lemma \ref{lem:Xu-to-X1} we need to show that  $P=X_1$. To this end we will use \cite{Aka-Shapira} to show that $P$ contains a sequence of equidistributing geodesics. 
Note that 
$$X_1=\SL_2(\RR\times\QQ_p)\GLZP \cong \SL_2(\RR\times\QQ_p)/\SL_2(\ZZ[\tfrac1p]).
$$
The setting of \cite{Aka-Shapira} is similar, namely $\operatorname{PGL}_2(\RR\times\QQ_p)/\operatorname{PGL}_2(\ZZ[\tfrac1p])$, which is slightly more general than we need here. Before giving further details, we record first the main matrix computation of this argument.

Set
$$
A_\infty
:=
\left\{
\left(\diag(e^t,e^{-t}),I\right): t\in\RR
\right\}
\subseteq \pdiag,\, b_n
:=
\left(p^{-n}I_2,\diag(1,p^{-2n})\right)\in \pdiag.
$$

\begin{lemma}
For every $n\in \NN$ we have
$
 \Delta_{p^{2n}\alpha,1}\in P$.
\end{lemma}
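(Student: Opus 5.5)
The plan is to show that $\Delta_{p^{2n}\alpha,1}$ is in fact a point of the orbit $\pdiag\Delta_{\alpha,1}$, up to a real diagonal correction lying in $A_\infty$. We compute $b_n\Delta_{\alpha,1}$ explicitly and then renormalise the real component by $A_\infty$ and the coset representative by $\GLZP$. Since $b_n\in\pdiag$ and $A_\infty\subseteq\pdiag$, this gives $\Delta_{p^{2n}\alpha,1}\in\pdiag\Delta_{\alpha,1}\subseteq P$.

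First we compute the two components of $b_n\Delta_{\alpha,1}$. The real component is
\[
p^{-n}\begin{pmatrix}1&0\\ 1/\alpha&1\end{pmatrix}.
\]
The $p$-adic component is $\diag(1,p^{-2n})$, since the $p$-adic part of $\Delta_{\alpha,1}$ is $I$. We now multiply on the right by $(\gamma,\gamma)\in\GLZP$ with $\gamma=\diag(1,p^{2n})$, which has determinant $p^{2n}\in\zloverp^\times$. This makes the $p$-adic component $\diag(1,p^{-2n})\diag(1,p^{2n})=I$. The real component becomes
\[
p^{-n}\begin{pmatrix}1&0\\ 1/\alpha&p^{2n}\end{pmatrix}.
\]

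Next we write this real matrix as a diagonal element times a lower unipotent. We have
\[
p^{-n}\begin{pmatrix}1&0\\ 1/\alpha&p^{2n}\end{pmatrix}
=\diag(p^{-n},p^{n})\begin{pmatrix}1&0\\ p^{-2n}/\alpha&1\end{pmatrix}.
\]
The lower-left entry comes from $p^{-n}\cdot\alpha^{-1}=p^{n}\cdot x$, so $x=p^{-2n}/\alpha=1/(p^{2n}\alpha)$. The factor $\diag(p^{-n},p^n)$ is $\diag(e^{t},e^{-t})$ with $t=-n\log p$, and $(\diag(e^t,e^{-t}),I)\in A_\infty$. Applying its inverse, which is also in $A_\infty\subseteq\pdiag$, leaves exactly the representative of $\Delta_{p^{2n}\alpha,1}$ from \eqref{eq:Lgammadef} with $u=1$.

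The conclusion is
\[
\left(\diag(p^{n},p^{-n}),I\right)\,b_n\,\Delta_{\alpha,1}=\Delta_{p^{2n}\alpha,1}.
\]
Both factors on the left lie in $\pdiag$, and $\pdiag$ is a commutative semigroup containing $A_\infty$. Hence $\Delta_{p^{2n}\alpha,1}$ lies in the orbit itself, not just its closure, and therefore in $P$.

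The steps to check are as follows.
\begin{itemize}
\item $\gamma=\diag(1,p^{2n})$ lies in $\GLZP$; it does, since its entries are in $\zloverp$ and its determinant is a unit of $\zloverp$.
\item The left action of $\pdiag$ commutes with the right $\GLZP$-coset, so we can work with representatives.
\item The sign and normalisation conventions in \eqref{eq:A} give $b_n\in\pdiag$: take $t=0$, $a=1$ and the given $n$.
\end{itemize}
I do not expect a genuine obstacle. The only possible pitfall is the bookkeeping of which side $\gamma$ multiplies and confirming that the entry $p^{-2n}/\alpha$ equals $1/(p^{2n}\alpha)$, which matches the parametrisation $v=1/\alpha$ in $\Delta_{\alpha,u}$.
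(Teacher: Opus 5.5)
Your proposal is correct and is essentially the paper's own argument: right-multiply $b_n\Delta_{\alpha,1}$ by $(\diag(1,p^{2n}),\diag(1,p^{2n}))\in\GLZP$ to clear the $p$-adic component, factor the real component as $\diag(p^{-n},p^{n})$ times the lower unipotent with entry $1/(p^{2n}\alpha)$, and absorb the diagonal factor using $A_\infty\subseteq\pdiag$. The paper records this as $b_n\Delta_{\alpha,1}=a_{-n\log p}\Delta_{p^{2n}\alpha,1}$. That is your identity $(\diag(p^{n},p^{-n}),I)\,b_n\Delta_{\alpha,1}=\Delta_{p^{2n}\alpha,1}$ rearranged, so, as you note, the point lies in the orbit itself and not just in its closure.
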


\begin{proof}
Noting that
$
\delta:=\left(\diag(1,p^{2n}),\diag(1,p^{2n})\right)\in \GLZP ,
$
and that $A_\infty\subseteq \pdiag$, the lemma follows from
the following computation:
\begin{align*}
b_n\Delta_{\alpha,1}
&=
\left(p^{-n}I_2,\diag(1,p^{-2n})\right)
\left(
\begin{pmatrix}
1 & 0\\
\alpha^{-1} & 1
\end{pmatrix},
I_2
\right)\delta\GLZP \\
&=
\left(
\begin{pmatrix}
p^{-n} & 0\\
p^{-n}\alpha^{-1} & p^{n}
\end{pmatrix},
I_2
\right)\GLZP \\
&=
\left(\diag(p^{-n},p^{n}),I_2\right)
\left(
\begin{pmatrix}
1 & 0\\
p^{-2n}\alpha^{-1} & 1
\end{pmatrix},
I_2
\right)\GLZP \\
&=
a_{-n\log p}\,\Delta_{p^{2n}\alpha,1}.
\end{align*}
\end{proof}




\begin{proof}[Proof of Theorem \ref{thm: main} (the quadratic irrational case)]
    As explained after Lemma \ref{lem:Xu-to-X1}, it is enough to show that $P=X_1$. Let $S=\set{\infty,p}$. In the terminology of \cite{Aka-Shapira}, the forward orbit under $A_\infty$ of the point $\Delta_{p^{2n}\alpha,1}$ converges to an $S$-adic lift of the closed geodesics corresponding to $p^{2n}\alpha$. For $n\in \NN$, these orbits lie on a rational branch in the
$S$-Hecke graph. Indeed, by \cite[Definition~4.3]{Aka-Shapira}, such a branch is rational ($\omega$ in their notation is simply $I_2$).
By \cite[Theorem~4.8(4)]{Aka-Shapira} this branch is non-degenerate. Hence
\cite[Theorem~4.8(1)]{Aka-Shapira} implies that the associated 
$A_\infty$-orbits equidistribute in $X_1$. As these orbits lie in the closure $P$, this shows that $P=X_1$, as needed. 
\end{proof}

\bibliographystyle{plain}
\bibliography{BibErg}{}

\end{document}